\newtheorem{Theorem}{Theorem}[section]
\newtheorem{Lemma}[Theorem]{Lemma}
\newtheorem{Proposition}[Theorem]{Proposition}
\newtheorem{Corollary}[Theorem]{Corollary}
\theoremstyle{definition}
\newtheorem{Example}[Theorem]{Example}
\theoremstyle{remark}
\newtheorem{Remark}[Theorem]{Remark}
\def\@thmcountersep{-}
\numberwithin{equation}{section}
\begin{document} 

\title[$\triangle Y$-exchanges and the Conway-Gordon theorems]{$\triangle Y$-exchanges and the Conway-Gordon theorems}

\author{Ryo Nikkuni}
\address{Department of Mathematics, School of Arts and Sciences, Tokyo Woman's Christian University, 2-6-1 Zempukuji, Suginami-ku, Tokyo 167-8585, Japan}
\email{nick@lab.twcu.ac.jp}
\thanks{The first author was partially supported by Grant-in-Aid for Young Scientists (B) (No. 21740046), Japan Society for the Promotion of Science.}

\author{Kouki Taniyama}
\address{Department of Mathematics, School of Education, Waseda University, Nishi-Waseda 1-6-1, Shinjuku-ku, Tokyo, 169-8050, Japan}
\email{taniyama@waseda.jp}
\thanks{The second author was partially supported by Grant-in-Aid for Scientific Research (C) (No. 21540099), Japan Society for the Promotion of Science.}

\subjclass{Primary 57M15; Secondary 57M25}

\date{}

\dedicatory{Dedicated to Professor Shin'ichi Suzuki for his 70th birthday}

\keywords{Spatial graph, Intrinsic linkedness, Intrinsic knottedness, $\triangle Y$-exchange}

\begin{abstract}
Conway-Gordon proved that for every spatial complete graph on $6$ vertices, the sum of the linking numbers over all of the constituent $2$-component links is congruent to $1$ modulo $2$, and for every spatial complete graph on $7$ vertices, the sum of the Arf invariants over all of the Hamiltonian knots is also congruent to $1$ modulo $2$. In this paper, we give a Conway-Gordon type theorem for any graph which is obtained from the complete graph on $6$ or $7$ vertices by a finite sequence of $\triangle Y$-exchanges. 
\end{abstract}

\maketitle

\section{Introduction} 

Throughout this paper we work in the piecewise linear category. Let $f$ be an embedding of a finite graph $G$ into the $3$-sphere. Then $f$ is called a {\it spatial embedding} of $G$ and $f(G)$ is called a {\it spatial graph}. We denote the set of all spatial embeddings of $G$ by ${\rm SE}(G)$. We call a subgraph of $G$ which is homeomorphic to a circle a {\it cycle} of $G$, and a cycle of $G$ which contains exactly $k$ edges a {\it $k$-cycle} of $G$. For a positive integer $n$, $\Gamma^{(n)}(G)$ denotes the set of all cycles of $G$ if $n=1$ and the set of all unions of mutually disjoint $n$ cycles of $G$ if $n\ge 2$. We denote the union of $\Gamma^{(n)}(G)$ over all positive integer $n$ by $\bar{\Gamma}(G)$. In the case of $n=1$, we denote $\Gamma^{(1)}(G)$ by $\Gamma(G)$ simply, and denote the subset of $\Gamma(G)$ consisting of all $k$-cycles of $G$ by $\Gamma_{k}(G)$. For an element $\gamma$ in $\Gamma^{(n)}(G)$ and an element $f$ in ${\rm SE}(G)$, $f(\gamma)$ is none other than a knot in $f(G)$ if $n=1$ and an $n$-component link in $f(G)$ if $n\ge 2$. 

Let $K_{n}$ be the {\it complete graph} on $n$ vertices, namely the simple graph consisting of $n$ vertices in which every pair of distinct vertices is connected by exactly one edge. For spatial embeddings of $K_{6}$ and $K_{7}$, we recall the following, which are called the Conway-Gordon theorems.

\begin{Theorem}\label{CG} 
{\rm (Conway-Gordon \cite{CG83})}
\begin{enumerate}
\item For any element $f$ in ${\rm SE}(K_{6})$, it follows that 
\begin{eqnarray*}
\sum_{\gamma\in \Gamma^{(2)}(K_{6})}{\rm lk}(f(\gamma))\equiv 1\pmod{2},  
\end{eqnarray*}
where ${\rm lk}$ denotes the {\it linking number} in the $3$-sphere. 

\item For any element $f$ in ${\rm SE}(K_{7})$, it follows that 
\begin{eqnarray*}
\sum_{\gamma\in \Gamma_{7}(K_{7})}{\rm Arf}(f(\gamma))\equiv 1\pmod{2},  
\end{eqnarray*}
where ${\rm Arf}$ denotes the {\it Arf invariant} \cite{Rober65}. 

\end{enumerate}
\end{Theorem}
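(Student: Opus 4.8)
The plan is to prove both parts by the same two-step strategy, standard for mod-$2$ invariants of spatial graphs: first show that each displayed sum, reduced modulo $2$, does not depend on the choice of spatial embedding, and then evaluate it on one conveniently chosen embedding. The independence rests on two ingredients. First, any two elements of $\mathrm{SE}(K_{n})$ are transformed into one another by a finite sequence of ambient isotopies and crossing changes; since $\mathrm{lk}$ and $\mathrm{Arf}$ are ambient isotopy invariants, it suffices to control the effect of a single crossing change. Second, I would use the two skein-type formulas: a crossing change between two strands alters the linking number of a $2$-component link by $\pm 1$ exactly when the two strands lie in different components, and leaves it unchanged otherwise; and a crossing change on a knot $K$ changes $\mathrm{Arf}(K)$ modulo $2$ by the linking number of the oriented (Seifert) smoothing of the changed crossing.

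For (1), I would first record that $\Gamma^{(2)}(K_{6})$ is exactly the set of $10$ unordered pairs of vertex-disjoint $3$-cycles. Fix a crossing change occurring at a crossing of strands lying in edges $e_{1}$ and $e_{2}$. If $e_{1}=e_{2}$, or if $e_{1}$ and $e_{2}$ share a vertex, then no pair of vertex-disjoint triangles can contain $e_{1}$ in one component and $e_{2}$ in the other, so every summand $\mathrm{lk}(f(\gamma))$ is unchanged. If $e_{1}$ and $e_{2}$ are disjoint, then a triangle through $e_{1}$ is completed by one of the two remaining vertices, and its complementary triangle then automatically contains $e_{2}$; hence exactly two of the ten pairs separate $e_{1}$ from $e_{2}$, so exactly two linking numbers change, each by $\pm 1$. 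In every case the total change is even, so the sum modulo $2$ is an invariant of $f$. It then remains only to exhibit one embedding of $K_{6}$—for instance one carrying a single Hopf link among the triangle pairs and splitting all other pairs—for which the sum is visibly $1$.

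For (2) the same scheme applies, but the combinatorics is the real difficulty, and this is where I expect the \emph{main obstacle} to lie. A crossing change at a crossing in edges $e_{1},e_{2}$ can change $\mathrm{Arf}(f(\gamma))$ only for those Hamiltonian $7$-cycles $\gamma$ containing both $e_{1}$ and $e_{2}$, for which the crossing is a self-crossing of $\gamma$; for each such $\gamma$ the oriented smoothing splits $\gamma$ into two disjoint cycles $\gamma_{1}\cup\gamma_{2}$ whose vertex sets partition those of $\gamma$, and the contribution to the change of the sum is $\mathrm{lk}(f(\gamma_{1}\cup\gamma_{2}))$ modulo $2$. The crux is therefore to prove that, as $\gamma$ ranges over the Hamiltonian cycles through $e_{1}$ and $e_{2}$, these linking numbers sum to $0$ modulo $2$. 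My plan for this is a case analysis on the cyclic positions of $e_{1}$ and $e_{2}$ along $\gamma$, which govern the sizes of $\gamma_{1}$ and $\gamma_{2}$: for longer cycles I would split along diagonals (whose paired contributions cancel) to express each linking number as a sum of linking numbers of disjoint triangle pairs, each a constituent link of some $K_{6}=K_{7}-v$, and then play the resulting parity off against the count established in part (1). The hard part will be organizing this enumeration without error and handling the short-cycle (bigon) smoothings, which are not genuine subgraph cycles and require separate care. Granting the invariance, I would finish as in (1), by evaluating the sum on an explicit embedding of $K_{7}$ in which exactly one Hamiltonian cycle is a trefoil (so that its Arf invariant is $1$) and all others are unknotted, giving the value $1$.
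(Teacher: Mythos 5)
The paper does not actually prove Theorem \ref{CG}: it is quoted from Conway--Gordon \cite{CG83}, and the only internal remark on its proof is that it follows from Theorem \ref{N_refine} (the integral refinement of \cite{N09b}) by reduction modulo two --- and Theorem \ref{N_refine} is itself only cited. So your proposal has to be measured against the classical argument of \cite{CG83}, which is exactly the strategy you adopt: invariance of each mod-$2$ sum under crossing changes, then evaluation on one explicit embedding. Your part (1) is complete and correct: the linking number changes (by $\pm 1$) only at crossings between strands in distinct components, and for two disjoint edges of $K_{6}$ exactly two of the ten triangle--triangle pairs separate them, so the sum is invariant mod $2$; evaluation on a standard embedding gives $1$.

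Part (2), however, has a genuine gap, and it sits precisely at the step you yourself flag as the crux. First, your description of the smoothing is incorrect as stated: smoothing a self-crossing of a Hamiltonian knot $f(\gamma)$ occurring between $f(e_{1})$ and $f(e_{2})$ does not produce the image of two disjoint subgraph cycles $\gamma_{1}\cup\gamma_{2}$ of $K_{7}$; each resulting circle consists of one of the two arcs of $\gamma$ joining $e_{1}$ to $e_{2}$ together with portions of \emph{both} edges $e_{1}$ and $e_{2}$ meeting near the crossing point. These circles are not constituent knots of the spatial graph, so before any counting can begin one must relate their linking number to linking numbers of genuine constituent $2$-component links --- this is a nontrivial step, not a bookkeeping convention. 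Second, the counting itself, namely that the sum of these linking numbers over all Hamiltonian cycles containing $e_{1}$ and $e_{2}$ is even, is the entire content of the $K_{7}$ half of \cite{CG83}, and you offer only a plan (``split along diagonals,'' ``play the parity off against part (1)''). That plan is moreover suspect on its face: part (1) asserts that the relevant sum over a copy of $K_{6}$ is \emph{odd}, not even, and $K_{7}$ contains seven (an odd number of) copies of $K_{6}$, so any reduction of the smoothed-link sum to sums over $K_{6}$-subgraphs must control multiplicities exactly; as written, nothing pins this down, and a naive version of it would contradict the invariance you are trying to prove. Until this cancellation lemma is actually established, the invariance of $\sum {\rm Arf}$, and hence part (2), remains unproven.
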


Theorem \ref{CG} implies that for any element $f$ in ${\rm SE}(K_{6})$, there exists an element $\gamma$ in $\Gamma^{(2)}(K_{6})$ such that ${\rm lk}(f(\gamma))$ is odd, and for any element $f$ in ${\rm SE}(K_{7})$, there exists an element $\gamma$ in $\Gamma_{7}(K_{7})$ such that ${\rm Arf}(f(\gamma))=1$. A graph is said to be {\it intrinsically linked} if for any element $f$ in ${\rm SE}(G)$, there exists an element $\gamma$ in $\Gamma^{(2)}(G)$ such that $f(\gamma)$ is a nonsplittable $2$-component link, and to be {\it intrinsically knotted} if for any element $f$ in ${\rm SE}(G)$, there exists an element $\gamma$ in $\Gamma(G)$ such that $f(\gamma)$ is a nontrivial knot. Theorem \ref{CG} also implies that $K_{6}$ is intrinsically linked and $K_{7}$ is intrinsically knotted. Moreover, we can obtain another intrinsically linked (resp. knotted) graph from $K_{6}$ (resp. $K_{7}$) in the following way. A {\it $\triangle Y$-exchange} is an operation to obtain a new graph $G_{Y}$ from a graph $G_{\triangle}$ by removing all edges of a $3$-cycle $\triangle$ of $G_{\triangle}$ with the edges $uv,vw$ and $wu$, and adding a new vertex $x$ and connecting it to each of the vertices $u,v$ and $w$ as illustrated in Fig. \ref{Delta-Y} (we often denote ${ux}\cup {vx}\cup {wx}$ by $Y$). A {\it $Y \triangle$-exchange} is the reverse of this operation. Throughout this paper, the symbols $G_{\triangle},G_{Y},u,v,w$ and $x$ are used as in the sense of Fig. \ref{Delta-Y}. Motwani-Raghunathan-Saran \cite{MRS88} showed that if $G_{\triangle}$ is intrinsically linked (resp. knotted) then $G_{Y}$ is also intrinsically linked (resp. knotted). Thus any graph which is obtained from $K_{6}$ (resp. $K_{7}$) by a finite sequence of $\triangle Y$-exchanges is intrinsically linked (resp. knotted). The set of all graphs obtained from $K_{6}$ (resp. $K_{7}$) by a finite sequence of $\triangle Y$-exchanges consists of six (resp. fourteen) graphs as illustrated in Fig. \ref{Petersen2} (resp. Fig. \ref{Heawood2}). 

\begin{figure}[htbp]
      \begin{center}
\scalebox{0.45}{\includegraphics*{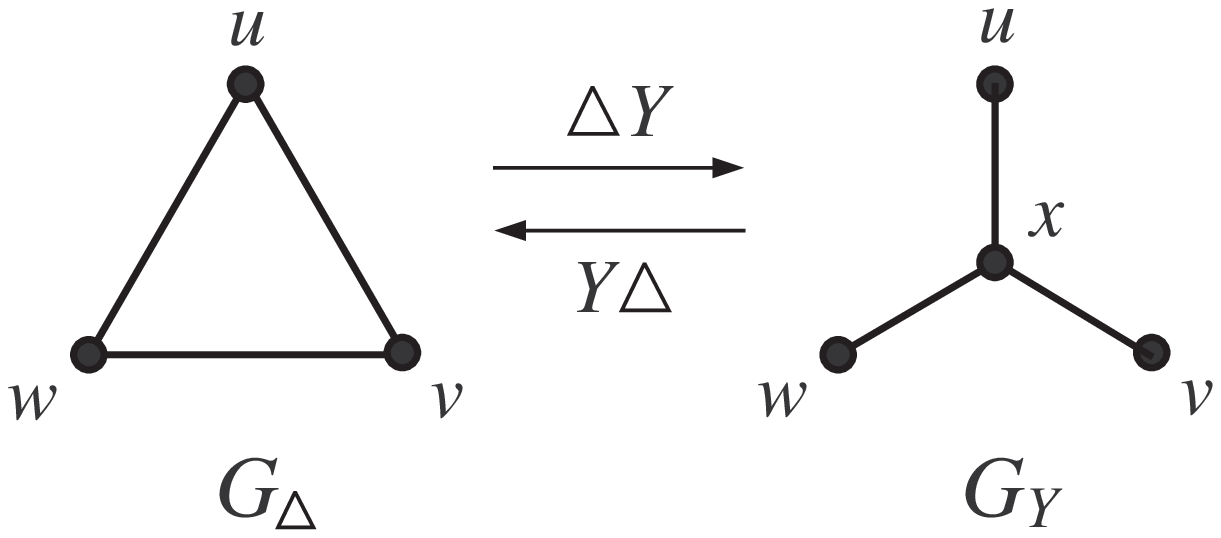}}
      \end{center}
   \caption{}
  \label{Delta-Y}
\end{figure} 
\begin{figure}[htbp]
      \begin{center}
\scalebox{0.5}{\includegraphics*{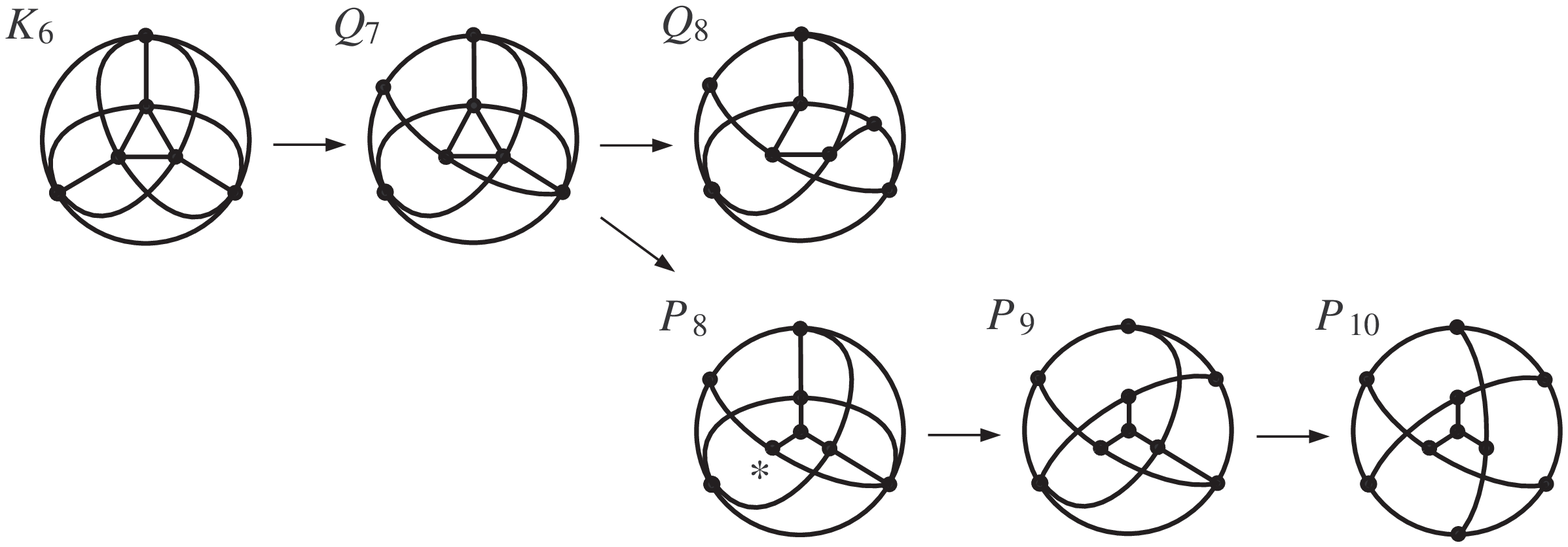}}
      \end{center}
   \caption{}
  \label{Petersen2}
\end{figure} 
\begin{figure}[htbp]
      \begin{center}
\scalebox{0.5}{\includegraphics*{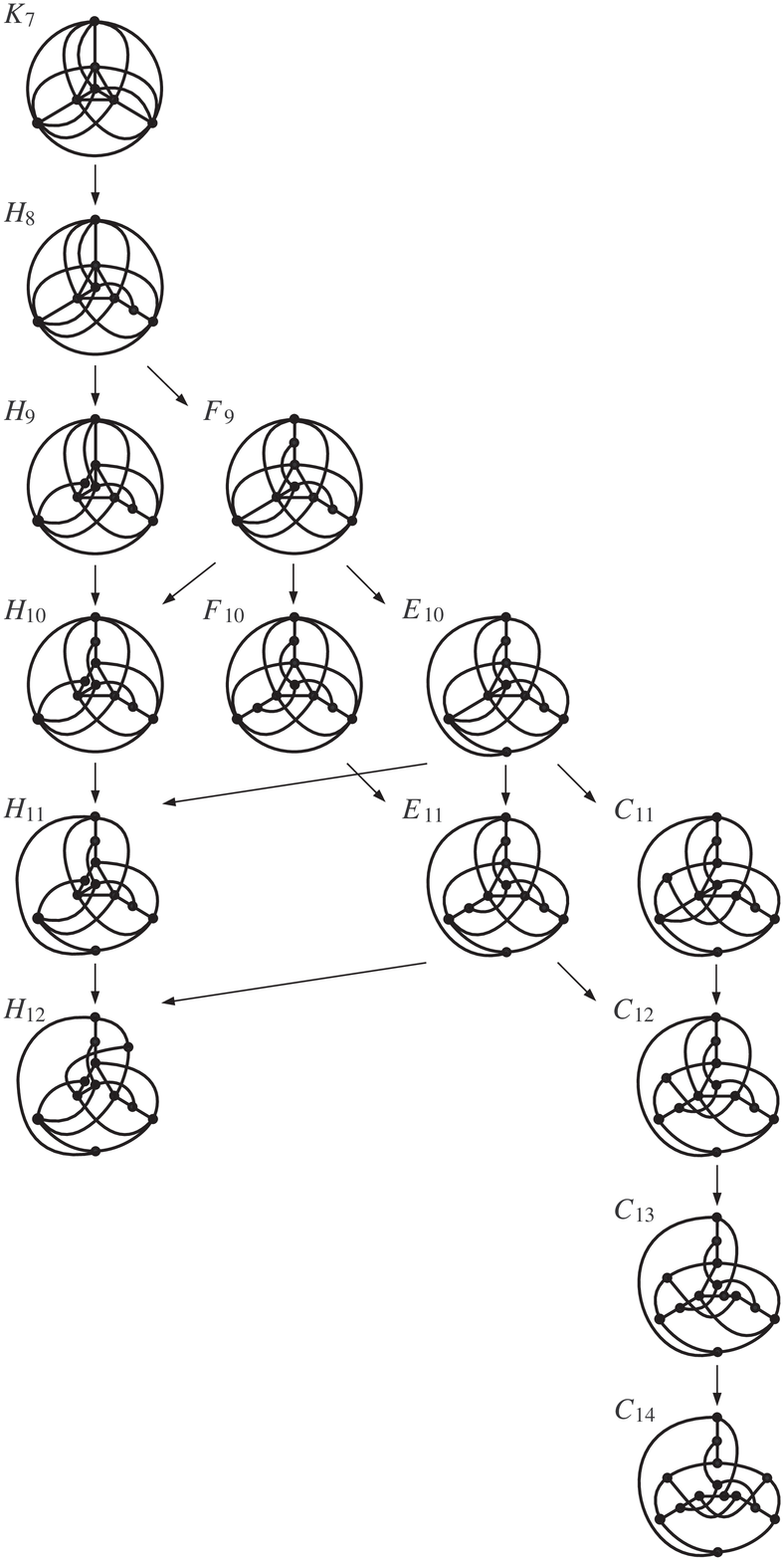}}
      \end{center}
   \caption{}
  \label{Heawood2}
\end{figure} 

Our purpose in this paper is to give a Conway-Gordon type theorem as Theorem \ref{CG} (1) (resp. (2)) for any graph which is obtained from $K_{6}$ (resp. $K_{7}$) by a finite sequence of $\triangle Y$-exchanges. Let $G_{\triangle}$ and $G_{Y}$ be two graphs such that $G_{Y}$ is obtained from $G_{\triangle}$ by a single $\triangle Y$-exchange. We denote the set of all elements in $\bar{\Gamma}(G_{\triangle})$ containing $\triangle$ by $\bar{\Gamma}_{\triangle}(G_{\triangle})$. 
Let $\gamma'$ be an element in $\bar{\Gamma}(G_{\triangle})$ which does not contain $\triangle$. Then there exists an element $\bar{\Phi}(\gamma')$ in $\bar{\Gamma}(G_{Y})$ such that $\gamma'\setminus \triangle=\bar{\Phi}(\gamma')\setminus Y$. It is easy to see that the correspondence from $\gamma'$ to $\bar{\Phi}(\gamma')$ defines a surjective map 
\begin{eqnarray}\label{phi}
\bar{\Phi}=\bar{\Phi}_{G_{\triangle},G_{Y}}:\bar{\Gamma}(G_{\triangle})\setminus \bar{\Gamma}_{\triangle}(G_{\triangle})\longrightarrow \bar{\Gamma}(G_{Y}).
\end{eqnarray}
Let $A$ be an additive group. We say that an $A$-valued unoriented link invariant $\alpha$ is {\it compressible} if $\alpha(L)=0$ for any unoriented link $L$ which have a component $K$ bounding a disk $D$ in the $3$-sphere with $D\cap L = \partial D = K$. Namely $\alpha(L)=0$ if $L$ contains a trivial knot as a split component. In particular $\alpha(L)=0$ when $L$ is a trivial knot. Suppose that for each element $\gamma'$ in $\bar{\Gamma}(G_{\triangle})$, an $A$-valued unoriented link invariant $\alpha_{\gamma'}$ is assigned. Then for each element $\gamma$ in $\bar{\Gamma}(G_{Y})$, we define an $A$-valued unoriented link invariant $\tilde{\alpha}_\gamma$ by
\begin{eqnarray*}
\tilde{\alpha}_\gamma(L)=\sum_{\gamma'\in\bar{\Phi}^{-1}(\gamma)}\alpha_{\gamma'}(L).
\end{eqnarray*}

Then we have the following theorem.

\begin{Theorem}\label{main2} 
Suppose that $\alpha_{\gamma'}$ is compressible for each element $\gamma'$ in $\bar{\Gamma}_\triangle(G_\triangle)$. Suppose that there exists a fixed element $c$ in $A$ such that
\begin{eqnarray*}
\sum_{\gamma'\in\bar{\Gamma}(G_{\triangle})}\alpha_{\gamma'}(g(\gamma'))=c
\end{eqnarray*}
for any element $g$ in ${\rm SE}(G_{\triangle})$. Then we have
\begin{eqnarray*}
\sum_{\gamma\in\bar{\Gamma}(G_{Y})}\tilde{\alpha}_\gamma(f(\gamma))=c
\end{eqnarray*}
for any element $f$ in ${\rm SE}(G_{Y})$.
\end{Theorem}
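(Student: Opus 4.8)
The plan is to fix an arbitrary $f\in{\rm SE}(G_Y)$ and manufacture from it a companion embedding $g\in{\rm SE}(G_\triangle)$ to which the hypothesis applies, so that $\sum_{\gamma'\in\bar{\Gamma}(G_\triangle)}\alpha_{\gamma'}(g(\gamma'))=c$. The embedding $g$ is obtained by a purely local modification of $f$. Let $B$ be a small ball that is a regular neighborhood of the claw $f(Y)=f(ux)\cup f(vx)\cup f(wx)$ and meets $f(G_Y)$ in exactly the three legs of this claw. Outside $B$ I set $g=f$; in particular $g$ and $f$ agree at $u,v,w$ and on every edge belonging neither to $Y$ nor to $\triangle$. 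Inside $B$ I delete the claw and insert an embedded triangle $g(\triangle)=g(uv)\cup g(vw)\cup g(wu)$ routed close to the legs, so that $g(uv)$ runs parallel to $f(ux)\cup f(xv)$, $g(vw)$ parallel to $f(vx)\cup f(xw)$, and $g(wu)$ parallel to $f(wx)\cup f(xu)$, the three arcs being pushed apart to be mutually disjoint. Since $B$ is a ball, $g(\triangle)$ can moreover be taken to bound a disk $D\subset B$.

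The heart of the argument is the comparison claim that for every $\gamma'\in\bar{\Gamma}(G_\triangle)\setminus\bar{\Gamma}_\triangle(G_\triangle)$ the links $g(\gamma')$ and $f(\bar{\Phi}(\gamma'))$ are ambient isotopic in the $3$-sphere, whence $\alpha_{\gamma'}(g(\gamma'))=\alpha_{\gamma'}(f(\bar{\Phi}(\gamma')))$. This is checked according to how many edges of $\triangle$ the cycles of $\gamma'$ use, which is $0$, $1$ or $2$ since $\gamma'$ does not contain $\triangle$. If no edge of $\triangle$ is used then $\bar{\Phi}(\gamma')=\gamma'$ and $g(\gamma')=f(\gamma')$ literally. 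If a single edge, say $uv$, is used, then $\bar{\Phi}(\gamma')$ replaces it by the path $u$-$x$-$v$, and $g(uv)$ was routed parallel to $f(ux)\cup f(xv)$, which yields the isotopy. If two edges, say $uv$ and $vw$, are used, then in $\gamma'$ the vertex $v$ meets only these two edges; the arc $g(uv)\cup g(vw)$ makes an out-and-back excursion toward $f(v)$ which, $f(v)$ carrying no further strand of $g(\gamma')$, can be pushed off inside $B$ to give an arc parallel to $f(ux)\cup f(xw)$. This is precisely the routing of $\bar{\Phi}(\gamma')$, which omits $v$ and uses $ux,wx$.

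For $\gamma'\in\bar{\Gamma}_\triangle(G_\triangle)$ the situation is different: because $\gamma'$ contains all three edges $uv,vw,wu$, the vertices $u,v,w$ are saturated, so $\triangle$ is a whole connected component of $\gamma'$ and no other component meets $u,v,w$. Choosing $B$ small enough, $g(\triangle)$ is then a split unknotted component of $g(\gamma')$ bounding the disk $D$ with $D\cap g(\gamma')=\partial D=g(\triangle)$, so compressibility gives $\alpha_{\gamma'}(g(\gamma'))=0$. Splitting the hypothesis sum accordingly and applying the comparison claim,
\[
c=\sum_{\gamma'\in\bar{\Gamma}(G_\triangle)}\alpha_{\gamma'}(g(\gamma'))=\sum_{\gamma'\in\bar{\Gamma}(G_\triangle)\setminus\bar{\Gamma}_\triangle(G_\triangle)}\alpha_{\gamma'}(f(\bar{\Phi}(\gamma'))).
\]
Regrouping the last sum over the fibers of the surjection $\bar{\Phi}$ and recalling that $\tilde{\alpha}_\gamma(L)=\sum_{\gamma'\in\bar{\Phi}^{-1}(\gamma)}\alpha_{\gamma'}(L)$ rewrites it as $\sum_{\gamma\in\bar{\Gamma}(G_Y)}\tilde{\alpha}_\gamma(f(\gamma))$, which is the desired identity.

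The main obstacle I expect is the construction in the first paragraph: making the three triangle edges simultaneously disjoint and, at the same time, leg-parallel enough that all the isotopies of the comparison claim hold for one and the same $g$. I plan to handle this by working inside a fixed regular neighborhood (a ball) of the tree $f(Y)$ and describing $g(\triangle)$ as a push-off of the boundary pattern of that neighborhood, which automatically renders it unknotted, disk-bounding in $B$, and parallel to each pair of legs; the detour-removal in the two-edge case is then a small isotopy supported in $B$. Verifying that these local moves do not interfere with one another across the different cycles $\gamma'$ is the point requiring the most care.
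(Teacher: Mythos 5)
Your proof is correct and is essentially the paper's own argument: your embedding $g$ coincides (up to ambient isotopy) with the paper's $\varphi(f)$, which takes $g(\triangle)=\partial D$ for a disk $D$ with $D\cap f(G_Y)=f(Y)$ --- precisely your leg-parallel push-off triangle bounding a disk in the neighborhood ball --- and your comparison claim and compressibility-plus-fiber-regrouping steps are exactly its Proposition \ref{map} and Lemma \ref{lemma2a}. The only difference is presentational: the paper's disk formulation makes the ambient isotopies and the splitness of $g(\triangle)$ immediate, whereas you verify them by an explicit case analysis on how many edges of $\triangle$ a cycle uses.
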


As an application of Theorem \ref{main2}, we have the following. 

\begin{Theorem}\label{main} 
\begin{enumerate}
\item Let $G$ be a graph which is obtained from $K_{6}$ by a finite sequence of $\triangle Y$-exchanges. Then there exist a map $\omega$ from $\Gamma(G)$ to ${\mathbb Z}$ such that for any element $f$ in ${\rm SE}(G)$, it follows that 
\begin{eqnarray*}
2\sum_{\gamma\in \Gamma(G)}\omega(\gamma)a_{2}(f(\gamma))
=
\sum_{\gamma\in \Gamma^{(2)}(G)}{\rm lk}(f(\gamma))^{2}
-1, 
\end{eqnarray*}
where $a_{i}$ denotes the $i$th coefficient of the {\it Conway polynomial}. 

\item Let $G$ be a graph which is obtained from $K_{7}$ by a finite sequence of $\triangle Y$-exchanges. Then there exists a map $\omega$ from $\bar{\Gamma}(G)$ to ${\mathbb Z}$ such that for any element $f$ in ${\rm SE}(G)$, it follows that 
\begin{eqnarray*}
\sum_{\gamma\in \Gamma(G)}\omega(\gamma)a_{2}(f(\gamma))
=
2\sum_{\gamma\in \Gamma^{(2)}(G)}\omega(\gamma){\rm lk}(f(\gamma))^{2}
-21. 
\end{eqnarray*}
\end{enumerate}
\end{Theorem}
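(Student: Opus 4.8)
The plan is to obtain both identities by induction on the number of $\triangle Y$-exchanges needed to produce $G$ from $K_6$ (resp. $K_7$), using Theorem \ref{main2} for the inductive step and a refined Conway-Gordon formula for the base case.

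For the base case of (1) I would invoke the integral refinement that for every $g\in{\rm SE}(K_6)$ one has $\sum_{\gamma\in\Gamma^{(2)}(K_6)}{\rm lk}(g(\gamma))^2=2\sum_{\gamma\in\Gamma_5(K_6)}a_2(g(\gamma))+1$; this is exactly the asserted identity for $G=K_6$ with $\omega$ the indicator function of the $5$-cycles, and it says precisely that a certain sum of link invariants over $\bar\Gamma(K_6)$ equals the constant $c=-1$ for all $g$. An analogous refinement for $K_7$ supplies the base case of (2) with $c=-21$. To feed these into Theorem \ref{main2}, I would package the two sides of each identity as a single family $\{\alpha_{\gamma'}\}$: for (1) set $\alpha_{\gamma'}(L)=2\omega(\gamma')a_2(L)$ when $\gamma'$ is a knot, $\alpha_{\gamma'}(L)=-{\rm lk}(L)^2$ when $\gamma'$ is a $2$-component link, and $\alpha_{\gamma'}=0$ otherwise; for (2) set $\alpha_{\gamma'}(L)=\omega(\gamma')a_2(L)$ on knots and $-2\omega(\gamma'){\rm lk}(L)^2$ on $2$-component links. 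Since the Conway polynomial of a link with a split unknotted component vanishes, both $a_2$ and ${\rm lk}^2$ are compressible, so every $\alpha_{\gamma'}$, in particular those indexed by cycles containing $\triangle$, is compressible, and the hypotheses of Theorem \ref{main2} are met.

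Applying Theorem \ref{main2} across one $\triangle Y$-exchange then yields $\sum_{\gamma\in\bar\Gamma(G_Y)}\tilde\alpha_\gamma(f(\gamma))=c$, and the substance of the inductive step is to check that the transformed invariants $\tilde\alpha_\gamma=\sum_{\gamma'\in\bar\Phi^{-1}(\gamma)}\alpha_{\gamma'}$ retain the prescribed shape. The structural fact I would rely on is that $\bar\Phi$ preserves the number of components: a cycle of $G_\triangle$ meets $\triangle$ in $0$, $1$, or $2$ (necessarily adjacent) edges, and in each case the corresponding element of $\bar\Gamma(G_Y)$ is produced by rerouting a single arc through $x$, so knots map to knots and $2$-component links to $2$-component links. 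Consequently, for a knot $\gamma$ of $G_Y$ one gets $\tilde\alpha_\gamma(L)=2(\sum_{\gamma'\in\bar\Phi^{-1}(\gamma)}\omega(\gamma'))a_2(L)$, which defines the new weight on $G_Y$, and similarly for the $a_2$-terms in (2).

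The main obstacle is the ${\rm lk}^2$-terms, where the multiplicity of $\bar\Phi$ intervenes: a $2$-component link $\gamma$ of $G_Y$ has two preimages precisely when it passes through $x$ along two edges of $Y$ while avoiding the third vertex $w$, and a single preimage otherwise. For (2) this is harmless, because $\omega$ multiplies the ${\rm lk}^2$-terms as well and therefore accumulates uniformly as $\omega(\gamma)=\sum_{\gamma'\in\bar\Phi^{-1}(\gamma)}\omega(\gamma')$ for knots and links alike, so the shape of (2) is preserved automatically and the constant $-21$ is carried along by Theorem \ref{main2}. For (1) the link coefficient must stay equal to $1$, so I would have to show that no $2$-component link of any graph in the $K_6$-family passes through $x$ while avoiding $w$; this I expect to prove by a vertex count, since two disjoint cycles using $x$ but not $w$ would be forced into too few vertices, whence preimages of $2$-component links are unique and the coefficient is preserved. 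Carrying this verification through the finitely many graphs of the $K_6$-tower, together with the automatic vanishing of all terms on unions of three or more cycles, completes the induction.
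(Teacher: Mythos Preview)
Your overall strategy matches the paper's exactly: induct on the number of $\triangle Y$-exchanges, package the identity as $\sum_{\gamma'}\alpha_{\gamma'}(g(\gamma'))=c$ with compressible $\alpha_{\gamma'}$, apply Theorem~\ref{main2}, and verify that the transferred invariants $\tilde\alpha_\gamma$ retain the prescribed shape with the new weight $\tilde\omega(\gamma)=\sum_{\gamma'\in\bar\Phi^{-1}(\gamma)}\omega(\gamma')$. Your treatment of the ${\rm lk}^2$-coefficients is also on target; the paper confirms your vertex-count expectation for (1) via the direct check that every pair of disjoint cycles in a Petersen-family graph uses all the vertices, so that Proposition~\ref{injective} makes $\Phi^{(2)}$ bijective and each linking coefficient stays equal to $1$.

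The one genuine gap is your base case for (1). The integral refinement for $K_6$ is not the $5$-cycle-only identity you quote; the established result (Theorem~\ref{N_refine}(1)) reads
\[
2\sum_{\gamma\in\Gamma_6(K_6)}a_2(g(\gamma))-2\sum_{\gamma\in\Gamma_5(K_6)}a_2(g(\gamma))=\sum_{\gamma\in\Gamma^{(2)}(K_6)}{\rm lk}(g(\gamma))^2-1,
\]
so the initial weight must be $\omega=+1$ on $\Gamma_6(K_6)$ and $\omega=-1$ on $\Gamma_5(K_6)$, not the indicator of $\Gamma_5(K_6)$. Likewise for (2) the paper starts from Theorem~\ref{N_refine}(2), with weights $7,-6,-2$ on $7$-, $6$-, $5$-cycles and weight $1$ on $\Gamma^{(2)}_{4,3}(K_7)$. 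With these corrected starting data your argument goes through verbatim and coincides with the paper's proof.
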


As we will say later in Theorem \ref{N_refine}, Theorem \ref{main} (1) and (2) has been already shown by the first author in the case $G$ is $K_{6}$ and $K_{7}$, respectively \cite{N09b}. Theorem \ref{main} is shown by combining Theorem \ref{main2} and Theorem \ref{N_refine}. 

Note that the square of the linking number is congruent to the linking number modulo two, and the second coefficient of the Conway polynomial of a knot is congruent to the Arf invariant modulo two \cite{Ka83}. Thus by taking the modulo two reduction in Theorem \ref{main}, we have the following corollary. 

\begin{Corollary}\label{main_cor} 
\begin{enumerate}
\item Let $G$ be a graph which is obtained from $K_{6}$ by a finite sequence of $\triangle Y$-exchanges. Then, for any element $f$ in ${\rm SE}(G)$, it follows that 
\begin{eqnarray*}
\sum_{\gamma\in \Gamma^{(2)}(G)}{\rm lk}(f(\gamma))
\equiv 1 \pmod{2}. 
\end{eqnarray*}

\item Let $G$ be a graph which is obtained from $K_{7}$ by a finite sequence of $\triangle Y$-exchanges. Then there exists a map $\omega$ from $\Gamma(G)$ to ${\mathbb Z}_{2}$ such that for any element $f$ in ${\rm SE}(G)$, it follows that 
\begin{eqnarray*}
\sum_{\gamma\in \Gamma(G)}\omega(\gamma)a_{2}(f(\gamma)) \equiv 1\pmod{2}. 
\end{eqnarray*}
In other words, there exists a subset $\Gamma$ of $\Gamma(G)$ such that for any element $f$ in ${\rm SE}(G)$, it follows that 
\begin{eqnarray*}
\sum_{\gamma\in \Gamma}{\rm Arf}(f(\gamma)) \equiv 1 \pmod{2}.
\end{eqnarray*}
\end{enumerate}
\end{Corollary}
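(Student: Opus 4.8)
The plan is to obtain Corollary \ref{main_cor} as a purely arithmetic consequence of Theorem \ref{main}, by reducing each of its two identities modulo $2$. The only external facts I would invoke are the two elementary congruences recorded just above the statement: for every integer $m$ one has $m^{2}\equiv m\pmod 2$, so in particular ${\rm lk}(L)^{2}\equiv {\rm lk}(L)\pmod 2$ for every two-component link $L$; and $a_{2}(K)\equiv {\rm Arf}(K)\pmod 2$ for every knot $K$ by \cite{Ka83}. No further geometric or topological input is needed.

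For part (1), I would begin with the identity of Theorem \ref{main} (1),
\begin{eqnarray*}
2\sum_{\gamma\in \Gamma(G)}\omega(\gamma)a_{2}(f(\gamma))
=
\sum_{\gamma\in \Gamma^{(2)}(G)}{\rm lk}(f(\gamma))^{2}-1,
\end{eqnarray*}
and observe that its left-hand side is an even integer. Reducing modulo $2$ therefore gives $\sum_{\gamma\in \Gamma^{(2)}(G)}{\rm lk}(f(\gamma))^{2}\equiv 1\pmod 2$, and replacing each ${\rm lk}(f(\gamma))^{2}$ by ${\rm lk}(f(\gamma))$ termwise via $m^{2}\equiv m$ yields the asserted congruence $\sum_{\gamma\in \Gamma^{(2)}(G)}{\rm lk}(f(\gamma))\equiv 1\pmod 2$.

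For part (2), I would start from the identity of Theorem \ref{main} (2),
\begin{eqnarray*}
\sum_{\gamma\in \Gamma(G)}\omega(\gamma)a_{2}(f(\gamma))
=
2\sum_{\gamma\in \Gamma^{(2)}(G)}\omega(\gamma){\rm lk}(f(\gamma))^{2}-21.
\end{eqnarray*}
Modulo $2$ the doubled sum on the right vanishes and $-21\equiv 1$, so the integer-valued weighted sum on the left is congruent to $1$. I would then let $\omega$ denote the modulo-$2$ reduction of the original ${\mathbb Z}$-valued weight restricted to $\Gamma(G)$; this loses nothing, since the relevant sum already ranges only over $\Gamma(G)$, and it gives $\sum_{\gamma\in \Gamma(G)}\omega(\gamma)a_{2}(f(\gamma))\equiv 1\pmod 2$ with $\omega$ now valued in ${\mathbb Z}_{2}$. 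Finally, to pass to the reformulation I would set $\Gamma=\{\gamma\in\Gamma(G):\omega(\gamma)=1\}$ and use $a_{2}\equiv {\rm Arf}$ to rewrite the left-hand side as $\sum_{\gamma\in\Gamma}{\rm Arf}(f(\gamma))$, obtaining $\sum_{\gamma\in\Gamma}{\rm Arf}(f(\gamma))\equiv 1\pmod 2$.

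Since everything reduces to arithmetic modulo $2$, I do not expect a genuine obstacle here; the only point requiring a moment of care is the bookkeeping for $\omega$ in part (2), namely that the weight be read as the mod-$2$ reduction of the integer weight of Theorem \ref{main} (2), and that passing from the domain $\bar{\Gamma}(G)$ to $\Gamma(G)$ is harmless because the displayed sum involves only cycles.
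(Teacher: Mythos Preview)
Your proposal is correct and follows exactly the approach indicated in the paper: reduce each identity of Theorem \ref{main} modulo $2$, using $m^{2}\equiv m\pmod 2$ for linking numbers and $a_{2}\equiv{\rm Arf}\pmod 2$ for knots. The paper presents no further argument beyond this, so your write-up in fact fills in the details the authors left implicit.
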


Note that Corollary \ref{main_cor} (1) has already pointed out by Sachs \cite{S84}\footnote{For a graph $G$ which is obtained from $K_{6}$ by a finite sequence of $\triangle Y$ and $Y \triangle$-exchanges, Sachs showed that for every spatial embedding of $G$, the sum of certain geometric positive integer-valued invariants over all of the constituent $2$-component links is odd. In fact, this geometric invariant is congruent to the linking number modulo two.} and the second author-Yasuhara \cite{TY01}, but as far as the authors know, Corollary \ref{main_cor} (2) has not been known yet except the case $G$ is $K_{7}$, see also Remark \ref{comments}. 

\begin{Remark}\label{main_rem} 
\begin{enumerate}
\item 
The set of all graphs obtained from $K_{6}$ by a finite sequence of $\triangle Y$ and $Y \triangle$-exchanges is called the {\it Petersen family}. This family consists of six graphs of Fig. \ref{Petersen2} and the complete tripartite graph $K_{3,3,1}$ which cannot be obtained from $K_{6}$ by a finite sequence of $\triangle Y$-exchanges ($K_{3,3,1}$ is obtained from $P_{8}$ by a single $Y \triangle$-exchange at marked $Y$ as illustrated in Fig. \ref{Petersen2}). It is known that $K_{3,3,1}$ is also intrinsically linked \cite{S84} and it follows that 
\begin{eqnarray*}
\sum_{\gamma\in\Gamma^{(2)}(K_{3,3,1})}{\rm lk}(f(\gamma))\equiv 1\pmod{2}
\end{eqnarray*}
for any element $f$ in ${\rm SE}(K_{3,3,1})$ \cite{TY01}. Recently O'Donnol showed in \cite{D10} that there exist a map $\omega$ from $\Gamma(K_{3,3,1})$ to ${\mathbb Z}$ such that 
\begin{eqnarray*}
2\sum_{\gamma\in \Gamma(K_{3,3,1})}\omega(\gamma)a_{2}(f(\gamma))
=
\sum_{\gamma\in \Gamma^{(2)}(K_{3,3,1})}{\rm lk}(f(\gamma))^{2}
-1
\end{eqnarray*}
for any element $f$ in ${\rm SE}(K_{3,3,1})$. Namely, an integral version of the Conway-Gordon type theorem as Theorem \ref{main} (1) holds for any graph in the Petersen family.  

\item 
The set of all graphs which is obtained from $K_{7}$ by a finite sequence of $\triangle Y$ and $Y \triangle$-exchanges is called the {\it Heawood family}. This family consists of fourteen graphs of Fig. \ref{Heawood2} and the other six graphs which cannot be obtained from $K_{7}$ by a finite sequence of $\triangle Y$-exchanges. It is known that a graph in the Heawood family is intrinsically knotted if and only if the graph is obtained from $K_{7}$ by a finite sequence of $\triangle Y$-exchanges \cite{HNTY10}. Namely, an integral version of the Conway-Gordon type theorem as Theorem \ref{main} (2) holds for any graph in the Heawood family which is intrinsically knotted. 
\end{enumerate}
\end{Remark}

In the next section, we prove Theorem \ref{main2}. In section $3$, we prove Theorem \ref{main}.

\section{Proof of Theorem \ref{main2}}

Let $f$ be a spatial embedding of $G_{Y}$ and $D$ a $2$-disk in the $3$-sphere such that $D\cap f(G_{Y})=f(Y)$ and $\partial D \cap f(G_{Y}) = \{f(u),f(v),f(w)\}$. Let $\varphi(f)$ be a spatial embedding of $G_{\triangle}$ such that $\varphi(f)(x)=f(x)$ for $x\in G_{\triangle}\setminus \triangle = G_{Y}\setminus Y$ and $\varphi(f)(G_{\triangle})=\left(f(G_{Y})\setminus f(Y)\right)\cup \partial D$. Thus we obtain a map 
\begin{eqnarray}\label{varphi}
\varphi:{\rm SE}(G_{Y})\longrightarrow {\rm SE}(G_{\triangle}). 
\end{eqnarray}
Then we immediately have the following. 

\begin{Proposition}\label{map} 
Let $f$ be an element in ${\rm SE}(G_{Y})$ and $\gamma$ an element in $\bar{\Gamma}(G_{Y})$. Then, $f(\gamma)$ is ambient isotopic to $\varphi(f)(\gamma')$ for each element $\gamma'$ in the inverse image of $\gamma$ by $\bar{\Phi}$. \hfill $\square$
\end{Proposition}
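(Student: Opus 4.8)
The plan is to compare the two links $f(\gamma)$ and $\varphi(f)(\gamma')$ directly inside the disk $D$ used to define $\varphi$. First I would record the two facts built into the construction: the embeddings $f$ and $\varphi(f)$ agree on the common subgraph $G_Y\setminus Y=G_\triangle\setminus\triangle$; and although $f(Y)$ is a tripod lying in $D$ with its three endpoints $f(u),f(v),f(w)$ on $\partial D$ and its centre $f(x)$ in the interior, the triangle is mapped to the boundary circle, $\varphi(f)(\triangle)=\partial D$, so that $\varphi(f)(uv),\varphi(f)(vw),\varphi(f)(wu)$ are the three arcs into which $f(u),f(v),f(w)$ divide $\partial D$. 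The three legs of $f(Y)$ cut $D$ into three sectors $\Delta_{uv},\Delta_{vw},\Delta_{wu}$, each a disk bounded by two legs and one of these boundary arcs; these sectors are the disks I will slide across.

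Next I would split according to whether $x$ lies on $\gamma$. A vertex of a cycle has degree two, so $x\in\gamma$ forces exactly two of the legs $ux,vx,wx$ to be in $\gamma$; write $P=\gamma\cap(G_Y\setminus Y)$ for the part of $\gamma$ in the common subgraph. By the definition of $\bar\Phi$, any $\gamma'\in\bar\Phi^{-1}(\gamma)$ satisfies $\gamma'\cap(G_\triangle\setminus\triangle)=\gamma\setminus Y=P$, with $\gamma'\setminus P$ consisting of triangle edges. If $x\notin\gamma$ then $\gamma$ uses no leg and $\gamma'=\gamma=P$ uses no triangle edge, so $f(\gamma)=\varphi(f)(\gamma')$ on the nose and the claim is immediate. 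If $x\in\gamma$, say its legs are $ux,vx$, then $f(\gamma)\cap D=f(ux\cup vx)$ is a single arc from $f(u)$ to $f(v)$ through the interior, while $\gamma'\setminus P$ joins $u$ to $v$ through the triangle, so $\varphi(f)(\gamma')\cap\bar D$ is an arc lying on $\partial D$ from $f(u)$ to $f(v)$: either the short arc $\varphi(f)(uv)$, or the long arc $\varphi(f)(wu)\cup\varphi(f)(vw)$ through $f(w)$ when $\gamma'$ uses the two edges $wu,vw$.

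The key geometric step is that in each case $f(\gamma)\cap\bar D$ and $\varphi(f)(\gamma')\cap\partial D$ cobound a sub-disk $\Delta$ of $\bar D$ whose interior is disjoint from $f(\gamma)$: a single sector $\Delta_{uv}$ when $\gamma'$ uses the one edge $uv$, and the union $\Delta_{vw}\cup\Delta_{wu}$ of two sectors (containing the third leg $f(wx)$ in its interior) when $\gamma'$ uses the two edges $wu,vw$. Since $f(\gamma)$ and $\varphi(f)(\gamma')$ coincide outside $\bar D$, sliding the arc $f(\gamma)\cap\bar D$ across $\Delta$ onto $\partial D$ defines an ambient isotopy of the $3$-sphere, supported in a regular neighbourhood of $\Delta$ and fixing the common exterior part $\varphi(f)(P)$, that carries $f(\gamma)$ onto $\varphi(f)(\gamma')$. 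This proves the proposition for every $\gamma'\in\bar\Phi^{-1}(\gamma)$.

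The only point that needs care is the subcase in which $\gamma'$ uses two triangle edges: there I must confirm that the third vertex $w$ is absent from $P$ — forced, since otherwise $w$ would have degree at least three in $\gamma'$ — so that the third leg $f(wx)$, which sits in the interior of $\Delta_{vw}\cup\Delta_{wu}$, is genuinely not part of $f(\gamma)$ and the long boundary arc meets $f(\gamma)$ only at its two endpoints. Granting this, the interior of $\Delta$ is disjoint from $f(\gamma)$ and the slide is the standard disk move, so no computation is required.
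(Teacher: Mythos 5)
Your argument is correct and is precisely the reasoning the paper treats as immediate: Proposition \ref{map} is stated without proof, the intended justification being exactly your disk slide, namely that the arc $f(\gamma)\cap D$ (two legs of the tripod) and the corresponding arc of $\partial D$ cobound a subdisk of $D$ whose interior misses $f(\gamma)$, since $D\cap f(G_Y)=f(Y)$ and the unused third leg and vertex cannot lie on $\gamma$. Your case analysis (no leg, one triangle edge, two triangle edges) and the degree-count check that $w\notin\gamma$ in the two-edge case fill in the details faithfully, so this matches the paper's (implicit) approach.
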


Suppose that for each element $\gamma'$ in $\bar{\Gamma}(G_{\triangle})$, an $A$-valued unoriented link invariant $\alpha_{\gamma'}$ is assigned. Then we have the following lemma. 

\begin{Lemma}\label{lemma2a} 
If $\alpha_{\gamma'}$ is compressible for any element $\gamma'$ in $\bar{\Gamma}_{\triangle}(G_{\triangle})$, then we have 
\begin{eqnarray*}
\sum_{\gamma\in\bar{\Gamma}(G_{Y})}\tilde{\alpha}_{\gamma}(f(\gamma))
=
\sum_{\gamma'\in\bar{\Gamma}(G_\triangle)}\alpha_{\gamma'}(\varphi(f)(\gamma'))
\end{eqnarray*}
for any element $f$ in ${\rm SE}(G_Y)$. 
\end{Lemma}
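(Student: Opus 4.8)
The plan is to rewrite the left-hand side entirely in terms of the embedding $\varphi(f)$ of $G_\triangle$, recognize it as a sum over $\bar{\Gamma}(G_\triangle)\setminus\bar{\Gamma}_\triangle(G_\triangle)$, and then observe that extending the sum over all of $\bar{\Gamma}(G_\triangle)$ only adds terms that vanish by compressibility. First I would unfold the definition of $\tilde{\alpha}_\gamma$, so that the left-hand side becomes the double sum
\[
\sum_{\gamma\in\bar{\Gamma}(G_Y)}\sum_{\gamma'\in\bar{\Phi}^{-1}(\gamma)}\alpha_{\gamma'}(f(\gamma)).
\]
For each $\gamma$ and each $\gamma'\in\bar{\Phi}^{-1}(\gamma)$, Proposition \ref{map} tells us that $f(\gamma)$ is ambient isotopic to $\varphi(f)(\gamma')$; since $\alpha_{\gamma'}$ is a link invariant, I may replace $\alpha_{\gamma'}(f(\gamma))$ by $\alpha_{\gamma'}(\varphi(f)(\gamma'))$ throughout.

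Next I would reindex. Because $\bar{\Phi}$ is a (surjective) map defined on $\bar{\Gamma}(G_\triangle)\setminus\bar{\Gamma}_\triangle(G_\triangle)$, its fibers $\bar{\Phi}^{-1}(\gamma)$ for $\gamma\in\bar{\Gamma}(G_Y)$ partition this set, so the double sum collapses to the single sum
\[
\sum_{\gamma'\in\bar{\Gamma}(G_\triangle)\setminus\bar{\Gamma}_\triangle(G_\triangle)}\alpha_{\gamma'}(\varphi(f)(\gamma')).
\]
It then remains to compare this with the right-hand side $\sum_{\gamma'\in\bar{\Gamma}(G_\triangle)}\alpha_{\gamma'}(\varphi(f)(\gamma'))$; the two differ exactly by the terms indexed by $\gamma'\in\bar{\Gamma}_\triangle(G_\triangle)$, and so the lemma reduces to showing that each such term is zero.

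The heart of the argument, and the step I expect to be the main obstacle, is this geometric vanishing. Since $\triangle$ is a $3$-cycle contained in the disjoint union of cycles $\gamma'\in\bar{\Gamma}_\triangle(G_\triangle)$, it must occur as one entire component of $\gamma'$, so $\varphi(f)(\gamma')$ has $\varphi(f)(\triangle)=\partial D$ as a component. I would then exploit the defining properties of $\varphi(f)$: from $D\cap f(G_Y)=f(Y)$ together with $\varphi(f)(G_\triangle)=(f(G_Y)\setminus f(Y))\cup\partial D$, the only points of $f(G_Y)\setminus f(Y)$ lying on $D$ are $f(u),f(v),f(w)\in\partial D$, whence $D\cap\varphi(f)(G_\triangle)=\partial D$ and a fortiori $D\cap\varphi(f)(\gamma')=\partial D$. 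Thus the component $\partial D$ of the link $\varphi(f)(\gamma')$ bounds the disk $D$ meeting the link only along $\partial D$, and compressibility of $\alpha_{\gamma'}$ forces $\alpha_{\gamma'}(\varphi(f)(\gamma'))=0$; the degenerate case $\gamma'=\triangle$, where $\varphi(f)(\triangle)$ is a trivial knot, is covered by the same definition. Adding these vanishing terms back identifies the reindexed left-hand side with the right-hand side, completing the proof.
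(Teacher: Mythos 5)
Your proof is correct and is essentially the paper's own argument run in the opposite direction: the paper starts from the sum over $\bar{\Gamma}(G_\triangle)$, discards the $\bar{\Gamma}_\triangle(G_\triangle)$ terms by compressibility, and then uses the fiber partition of $\bar{\Phi}$ together with Proposition \ref{map} to arrive at the left-hand side, exactly the steps you perform in reverse. Your explicit verification that $D\cap\varphi(f)(\gamma')=\partial D$ merely spells out what the paper summarizes by saying $\varphi(f)(\gamma')$ is a trivial knot or has a trivial split component, so there is no substantive difference.
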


\begin{proof}
For an element $\gamma'$ in $\bar{\Gamma}_{\triangle}(G_{\triangle})$, we see that $\varphi(f)(\gamma')$ is the trivial knot if $\gamma'$ belongs to $\Gamma(G_{\triangle})$ and a link containing a trivial knot as a split component if $\gamma'$ belongs to $\bar{\Gamma}(G_{\triangle})\setminus \Gamma(G_{\triangle})$. Since $\alpha_{\gamma'}$ is compressible for any element $\gamma'$ in $\bar{\Gamma}(G_{\triangle})$, we see that 
\begin{eqnarray*}
\sum_{\gamma'\in\bar{\Gamma}(G_\triangle)}\alpha_{\gamma'}(\varphi(f)(\gamma'))
= \sum_{\gamma'\in\bar{\Gamma}(G_{\triangle})\setminus \bar{\Gamma}_{\triangle}(G_{\triangle})}\alpha_{\gamma'}(\varphi(f)(\gamma')).
\end{eqnarray*}
Note that 
\begin{eqnarray*}
\bar{\Gamma}(G_{\triangle})\setminus \bar{\Gamma}_{\triangle}(G_{\triangle})
=\bigcup_{\gamma\in\bar{\Gamma}(G_{Y})}\bar{\Phi}^{-1}(\gamma). 
\end{eqnarray*}
Then, by Proposition \ref{map}, we see that 
\begin{eqnarray*}
\sum_{\gamma'\in\bar{\Gamma}(G_{\triangle})\setminus \bar{\Gamma}_{\triangle}(G_{\triangle})}\alpha_{\gamma'}(\varphi(f)(\gamma'))
&=&\sum_{\gamma\in\bar{\Gamma}(G_Y)}\left(\sum_{\gamma'\in\bar{\Phi}^{-1}(\gamma)}\alpha_{\gamma'}(\varphi(f)(\gamma'))\right)\\
&=&\sum_{\gamma\in\bar{\Gamma}(G_Y)}\left(\sum_{\gamma'\in\bar{\Phi}^{-1}(\gamma)}\alpha_{\gamma'}(f(\gamma))\right)
\\
&=& \sum_{\gamma\in\bar{\Gamma}(G_{Y})}\tilde{\alpha}_{\gamma}(f(\gamma)). 
\end{eqnarray*}
Thus we have the result. 
\end{proof}

\begin{proof}[Proof of Theorem \ref{main2}.] 
Suppose that there exists a fixed element $c$ in $A$ such that
\begin{eqnarray}\label{assume}
\sum_{\gamma'\in\bar{\Gamma}(G_{\triangle})}\alpha_{\gamma'}(g(\gamma'))=c
\end{eqnarray}
for any element $g$ in ${\rm SE}(G_{\triangle})$. Then by Lemma \ref{lemma2a} and (\ref{assume}), we have 
\begin{eqnarray*}
\sum_{\gamma\in\bar{\Gamma}(G_{Y})}\tilde{\alpha}_\gamma(f(\gamma))
=\sum_{\gamma'\in\bar{\Gamma}(G_{\triangle})}\alpha_{\gamma'}(\varphi(f)(\gamma'))
=c
\end{eqnarray*}
for any element $f$ in ${\rm SE}(G_{Y})$. 
\end{proof}

\section{Proof of Theorem \ref{main}}

Let $\gamma$ be an element in $\bar{\Gamma}(G_{Y})$. Then we see that the inverse image of $\gamma$ by $\bar{\Phi}$ contains at most two elements in $\bar{\Gamma}(G_{\triangle})\setminus \bar{\Gamma}_{\triangle}(G_{\triangle})$, see Fig. \ref{not_inj}. Moreover, we also see the following. 

\begin{Proposition}\label{injective} 
Let $\gamma$ be an element in $\bar{\Gamma}(G_{Y})$. Then, the inverse image of $\gamma$ by $\bar{\Phi}$ consists of exactly one element if and only if $\gamma$ contains $u,v,w$ and $x$, or $\gamma$ does not contain $x$. \hfill $\square$
\end{Proposition}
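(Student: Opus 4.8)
The plan is to analyze the fiber $\bar{\Phi}^{-1}(\gamma)$ directly by describing every element of $\bar{\Gamma}(G_{\triangle})\setminus\bar{\Gamma}_{\triangle}(G_{\triangle})$ that restricts to $\gamma\setminus Y$ on the common subgraph $G_{\triangle}\setminus\triangle=G_{Y}\setminus Y$. Writing $P=\gamma\setminus Y$, any preimage $\gamma'$ must satisfy $\gamma'=P\cup T$, where $T$ is a subset of the three triangle edges $\{uv,vw,wu\}$; since none of these three edges exists in $G_{Y}$, the subgraph $P$ contains none of them, so $T$ is unambiguously the \emph{new} part. The two conditions that pin down the admissible $T$ are: (i) $\gamma'=P\cup T$ is again a disjoint union of cycles, i.e.\ every vertex has degree $0$ or $2$; and (ii) $\gamma'$ does not contain $\triangle$, i.e.\ $T\neq\{uv,vw,wu\}$. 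So the proof reduces to counting the sets $T$ satisfying (i) and (ii), according to how $\gamma$ meets $Y$.

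First I would dispose of the case $x\notin\gamma$. Then $\gamma$ uses no edge of $Y$, so $P=\gamma$ and $u,v,w$ already have even degree in $P$; adding any nonempty $T$ would create an odd-degree vertex, forcing $T=\emptyset$ and hence the unique preimage $\gamma'=\gamma$. Next I would treat $x\in\gamma$. Since $x$ has degree $2$ in $\gamma$ and is incident in $G_{Y}$ only to $u,v,w$, exactly two of $ux,vx,wx$ lie in $\gamma$; by symmetry assume these are $ux,vx$, so that in $P$ the vertices $u$ and $v$ have degree $1$ while the degree of $w$ is unchanged (either $0$ or $2$ according as $w\notin\gamma$ or $w\in\gamma$). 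Condition (i) now forces exactly one triangle edge at each of $u$ and $v$, and an even number at $w$.

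The core of the argument is the subdivision according to whether $w\in\gamma$. If $w\in\gamma$ (equivalently $\gamma$ contains all of $u,v,w,x$), then $w$ already has degree $2$ in $P$, so no triangle edge may meet $w$; this excludes $uw$ and $vw$, leaving $T=\{uv\}$ as the only choice, and the preimage is unique. If $w\notin\gamma$, then $w$ has degree $0$ in $P$ and may receive either $0$ or $2$ triangle edges, yielding exactly the two admissible sets $T=\{uv\}$ and $T=\{uw,vw\}$; geometrically, the $u$--$v$ arc of $P$ left behind after deleting the path $u$--$x$--$v$ from $\gamma$ is reclosed either directly by $uv$ or through the new vertex $w$ by $uw\cup wv$, so the preimage has exactly two elements. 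Combining the three cases gives ``exactly one preimage'' precisely when $\gamma$ contains all of $u,v,w,x$ or $\gamma$ omits $x$, which is the assertion; this also recovers the bound of at most two elements noted before the statement.

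I do not anticipate a serious obstacle; the only points requiring care are verifying that both candidate sets $T$ in the last case genuinely yield disjoint unions of cycles (deleting $u$--$x$--$v$ leaves $u,v$ as the two endpoints of a single arc of $P$, so each reclosure produces one cycle) and confirming throughout that none of the admissible $\gamma'$ equals $\triangle$ itself, so that condition (ii) holds automatically and every preimage indeed lies in the domain of $\bar{\Phi}$.
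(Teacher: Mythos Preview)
Your argument is correct. The paper does not actually supply a proof of this proposition: it is stated with the end-of-proof symbol $\square$ immediately following, and the surrounding text merely refers to Fig.~\ref{not_inj} for the related fact that $|\bar{\Phi}^{-1}(\gamma)|\le 2$. Your detailed case analysis---writing every preimage as $P\cup T$ with $P=\gamma\setminus Y$ and $T\subseteq\{uv,vw,wu\}$, and then determining the admissible $T$ from the parity of degrees at $u,v,w$---is precisely the routine verification the authors left to the reader, so there is nothing to contrast.

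One small clarification worth tightening: your sentence ``each reclosure produces one cycle'' is slightly imprecise, since $\gamma$ may have several components and only the one through $x$ is being reclosed; what you need (and in fact have) is that the result is again a disjoint union of cycles, which follows because every vertex has degree $0$ or $2$. Also, in the subcase $w\in\gamma$ you might note that $w$ may lie either on the $u$--$v$ arc of $P$ or on a different component, but in both cases its degree in $P$ is already $2$ and the conclusion $T=\{uv\}$ is unaffected. These are cosmetic; the logic is sound.
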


\begin{figure}[htbp]
      \begin{center}
\scalebox{0.45}{\includegraphics*{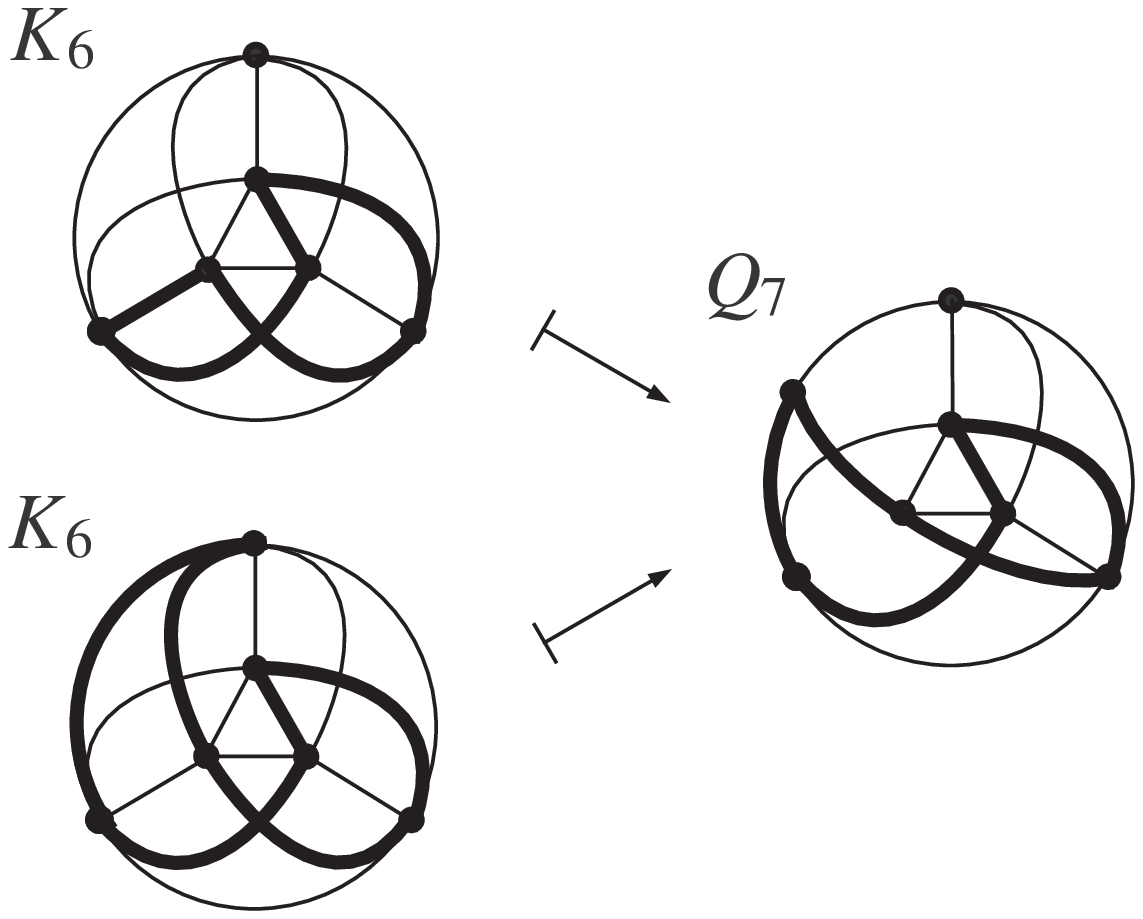}}
      \end{center}
   \caption{}
  \label{not_inj}
\end{figure} 

Note that if $\gamma'$ is an element in $\Gamma^{(n)}(G_{\triangle})\setminus \bar{\Gamma}_{\triangle}(G_{\triangle})$ then $\bar{\Phi}(\gamma')$ is an element in $\Gamma^{(n)}(G_{Y})$. This implies that the restriction map of $\bar{\Phi}$ on $\Gamma^{(n)}(G_{\triangle})\setminus \bar{\Gamma}_{\triangle}(G_{\triangle})$ induces a surjective map 
\begin{eqnarray*}
\Phi^{(n)}=\Phi_{G_{\triangle},G_{Y}}^{(n)}:\Gamma^{(n)}(G_{\triangle})\setminus \bar{\Gamma}_{\triangle}(G_{\triangle})\longrightarrow \Gamma^{(n)}(G_{Y}). 
\end{eqnarray*}
In particular, we denote $\Phi^{(1)}$ by $\Phi$ simply. The surjectivity of $\Phi^{(n)}$ implies that if $\Gamma^{(n)}(G_{\triangle})$ is an empty set then $\Gamma^{(n)}(G_{Y})$ is also an empty set for $n\ge 2$. Since both $\Gamma^{(n)}(K_{6})$ and $\Gamma^{(n)}(K_{7})$ are the empty sets for $n\ge 3$, we have the following. 

\begin{Proposition}\label{empty3} 
Let $G$ be a graph which is obtained from $K_{6}$ or $K_{7}$ by a finite sequence of $\triangle Y$-exchanges. Then $\Gamma^{(n)}(G)$ is an empty set for $n\ge 3$. \hfill $\square$
\end{Proposition}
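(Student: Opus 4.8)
The plan is to prove Proposition \ref{empty3} by a short inductive argument built on the surjectivity of the map $\Phi^{(n)}$ established just above the statement. First I would record the base case: for $G=K_6$ or $G=K_7$, the claim that $\Gamma^{(n)}(G)=\emptyset$ for $n\ge 3$ is elementary, since a disjoint union of three cycles requires at least $3\cdot 3=9$ vertices, whereas $K_6$ has only $6$ and $K_7$ has only $7$. (Each cycle uses at least three distinct vertices, and the three cycles must be mutually vertex-disjoint, so $9$ vertices are needed; this is exactly what the excerpt asserts right before the proposition.) Thus both $\Gamma^{(n)}(K_6)$ and $\Gamma^{(n)}(K_7)$ are empty for all $n\ge 3$.

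Next I would set up the induction on the length of the sequence of $\triangle Y$-exchanges. Suppose $G$ is obtained from $K_6$ or $K_7$ by a finite sequence of $\triangle Y$-exchanges, and write the sequence as $K_6=G_0,G_1,\dots,G_m=G$ (or starting from $K_7$), where each $G_{i+1}$ is obtained from $G_i$ by a single $\triangle Y$-exchange. By the inductive hypothesis I may assume $\Gamma^{(n)}(G_i)=\emptyset$ for all $n\ge 3$, with the base case $i=0$ handled above. The key observation, already stated in the paragraph preceding the proposition, is that a single $\triangle Y$-exchange induces for each $n$ a surjective map
\begin{eqnarray*}
\Phi^{(n)}:\Gamma^{(n)}(G_i)\setminus \bar{\Gamma}_{\triangle}(G_i)\longrightarrow \Gamma^{(n)}(G_{i+1}).
\end{eqnarray*}
Since the domain of $\Phi^{(n)}$ is a subset of $\Gamma^{(n)}(G_i)$, which is empty for $n\ge 3$ by hypothesis, the domain is empty; a surjection from the empty set forces the codomain $\Gamma^{(n)}(G_{i+1})$ to be empty as well. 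This advances the induction from $G_i$ to $G_{i+1}$, and after $m$ steps we conclude $\Gamma^{(n)}(G)=\emptyset$ for all $n\ge 3$, as desired.

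There is essentially no hard step here; the whole argument is a formal consequence of surjectivity together with the numerical base case. The only point requiring a moment's care is to make sure the single-exchange surjectivity is applied to the \emph{intermediate} graphs $G_i$ in the sequence rather than only to the endpoints, which is why I phrase the induction along the full chain $G_0,\dots,G_m$ of exchanges. Everything else is bookkeeping: a surjective image of an empty set is empty, so emptiness of $\Gamma^{(n)}$ in the top degrees is preserved under each $\triangle Y$-exchange and hence under the whole sequence.
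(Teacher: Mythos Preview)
Your proposal is correct and follows essentially the same argument as the paper: the paper notes just before the proposition that surjectivity of $\Phi^{(n)}$ forces $\Gamma^{(n)}(G_Y)=\emptyset$ whenever $\Gamma^{(n)}(G_\triangle)=\emptyset$, and that the base cases $K_6$ and $K_7$ have $\Gamma^{(n)}=\emptyset$ for $n\ge 3$, which is exactly your induction. Your added justification for the base case (three vertex-disjoint cycles need at least $9$ vertices) is a helpful elaboration of what the paper leaves implicit.
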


Now we prove Theorem \ref{main}. First we recall a refinement of the Conway-Gordon theorems which was shown by the first author. 

\begin{Theorem}\label{N_refine} 
{\rm (\cite{N09b})}
\begin{enumerate}
\item For any element $f$ in ${\rm SE}(K_{6})$, it follows that 
\begin{eqnarray*}
2\sum_{\gamma\in \Gamma_{6}(K_{6})}a_{2}(f(\gamma))
-2\sum_{\gamma\in \Gamma_{5}(K_{6})}a_{2}(f(\gamma))
=
\sum_{\gamma\in \Gamma^{(2)}(K_{6})}{\rm lk}(f(\gamma))^{2}
-1. 
\end{eqnarray*}

\item For any element $f$ in ${\rm SE}(K_{7})$, it follows that 
\begin{eqnarray*}
&&7\sum_{\gamma\in \Gamma_{7}(K_{7})}a_{2}(f(\gamma))
-6\sum_{\gamma\in \Gamma_{6}(K_{7})}a_{2}(f(\gamma))
-2\sum_{\gamma\in \Gamma_{5}(K_{7})}a_{2}(f(\gamma))\\
&=&
2\sum_{\gamma\in \Gamma_{4,3}^{(2)}(K_{7})}{\rm lk}(f(\gamma))^{2}
-21, 
\end{eqnarray*}
where $\Gamma_{k,l}^{(2)}(K_{7})$ denotes the set of all unions of two disjoint cycles of $K_{7}$ consisting of a $k$-cycle and an $l$-cycle. \hfill $\square$
\end{enumerate}
\end{Theorem}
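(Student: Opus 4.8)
The plan is to establish both identities by the method that underlies all Conway--Gordon type results: realize the difference of the two sides as a $\mathbb{Z}$--valued function on ${\rm SE}(G)$, show that this function is unchanged under a single crossing change, and then evaluate it on one explicitly chosen spatial embedding. For part (1), set
\[
D(f)=2\!\!\sum_{\gamma\in\Gamma_6(K_6)}\!\!a_2(f(\gamma))-2\!\!\sum_{\gamma\in\Gamma_5(K_6)}\!\!a_2(f(\gamma))-\!\!\sum_{\gamma\in\Gamma^{(2)}(K_6)}\!\!{\rm lk}(f(\gamma))^2,
\]
and for part (2) the analogous combination with weights $7,-6,-2$ and $2$; the assertion is that $D$ is identically equal to the constant $-1$ (resp.\ $-21$). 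Because each summand is an ambient isotopy invariant, $D$ is an invariant of $f$, and since any two spatial embeddings of a finite graph are connected by a finite sequence of ambient isotopies and crossing changes, it suffices to prove that $D$ takes the same value before and after an arbitrary crossing change and to compute $D$ once.

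The variation of $D$ under a crossing change is controlled by the Conway skein relation $\nabla_{L_+}-\nabla_{L_-}=z\,\nabla_{L_0}$. A crossing change occurs between two strands belonging to edges $e$ and $e'$ of $G$ (allowing $e=e'$, i.e.\ a self--crossing of one edge). It alters a constituent knot $f(\gamma)$ only when $\gamma\supset e\cup e'$, and then the $z^2$--coefficient of the skein relation gives $a_2(K_+)-a_2(K_-)={\rm lk}(L_0)$, where the smoothing $L_0$ is a two--component link whose linking number is a signed count of crossings between pairs of edges of $\gamma$. It alters a constituent two--component link $f(\gamma_1\sqcup\gamma_2)$ only when $e$ and $e'$ lie in different components, and then the $z^1$--coefficient gives ${\rm lk}(L_+)-{\rm lk}(L_-)=\pm1$, so that ${\rm lk}^2$ changes by $\pm2\,{\rm lk}+1$. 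Thus the total variation of $D$ becomes a weighted signed count of smoothing linking numbers over the cycles through $e\cup e'$, set against a weighted count of the terms $\pm2\,{\rm lk}+1$ over the links that separate $e$ and $e'$.

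The heart of the argument, and the step I expect to be the main obstacle, is the purely combinatorial verification that this total variation vanishes for every type of crossing change. Fixing the pair $(e,e')$ and distinguishing the cases in which $e,e'$ are adjacent, disjoint, or coincide, one must enumerate which $k$--cycles contain $e\cup e'$, identify the two--component link obtained by smoothing each such cycle, and match it term--by--term against the links in which $e$ and $e'$ are separated. The coefficients $2,-2$ for $K_6$ (and $7,-6,-2$ together with the factor $2$ for $K_7$) are precisely the weights forcing both the linear--in--${\rm lk}$ contributions and the residual constant $\pm1$ contributions to cancel. For $K_7$ this bookkeeping is considerably heavier, since one must simultaneously track cycles of lengths $5,6,7$ and their smoothings against the $(4,3)$--type links; the cancellation of the constant terms here is what produces the nontrivial value $-21$ rather than $-1$.

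Finally, with invariance in hand, I would evaluate $D$ on a single convenient embedding --- for instance a rectilinear or book embedding of $K_6$ (resp.\ $K_7$) in which every constituent knot is trivial and the constituent links can be read off directly --- to pin down the constant as $-1$ (resp.\ $-21$). As an internal consistency check, reducing each identity modulo $2$ must return the original Conway--Gordon congruences of Theorem \ref{CG}, since ${\rm lk}^2\equiv{\rm lk}$ and $a_2\equiv{\rm Arf}\pmod 2$.
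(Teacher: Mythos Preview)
The present paper does not prove this theorem; it is quoted from \cite{N09b} and closed with a terminal $\square$. Your outline---define $D(f)$ as the difference of the two sides, show that $D$ is unchanged by a single crossing change via the Conway skein relation, and then evaluate $D$ on one convenient embedding---is precisely the standard Conway--Gordon method and is the approach used in \cite{N09b}. So as far as comparison with the source goes, your route matches the original.

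What you have written, however, is a plan rather than a proof. You correctly isolate the crux: after a crossing change on a fixed edge pair $(e,e')$, the weighted sum of the $a_2$--variations (each equal to a smoothing linking number ${\rm lk}(L_0)$) over cycles containing $e\cup e'$ must cancel against the weighted sum of the ${\rm lk}^2$--variations (each of the form $\pm 2\,{\rm lk}+1$) over the $2$--component links separating $e$ from $e'$. You do not carry out this cancellation, and it is not automatic---it requires, for each adjacency type of $(e,e')$ (disjoint, adjacent, or equal), enumerating exactly which cycles and which links are affected and verifying that the specific weights $(2,-2,-1)$ for $K_6$ and $(7,-6,-2,-2)$ for $K_7$ force both the ${\rm lk}$--linear terms and the residual constants to vanish. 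For $K_7$ this bookkeeping is substantial. As a roadmap your proposal is correct and complete in its architecture; as a proof it is unfinished at exactly the point you yourself flag as the main obstacle.
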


Note that Theorem \ref{CG} can be obtained from Theorem \ref{N_refine} by taking the modulo two reduction.

\begin{proof}[Proof of Theorem \ref{main}.] 
\noindent
First we show (1). We define a map $\omega$ from $\bar{\Gamma}(K_{6})$ to ${\mathbb Z}$ by 
\begin{eqnarray*}
\omega(\gamma')=\left\{
       \begin{array}{@{\,}ll}
       1 & \mbox{if $\gamma'\in \Gamma_{6}(K_{6})\cup \Gamma^{(2)}(K_{6})$}\\
       -1 & \mbox{if $\gamma'\in\Gamma_{5}(K_{6})$}\\
       0 & \mbox{otherwise}
       \end{array}
     \right.
\end{eqnarray*}
for an element $\gamma'$ in $\bar{\Gamma}(K_{6})$. Then by Theorem \ref{N_refine} (1), it follows that 
\begin{eqnarray}\label{assump}
2\sum_{\gamma'\in \Gamma(K_{6})}\omega(\gamma')a_{2}(g(\gamma'))
=
\sum_{\gamma'\in \Gamma^{(2)}(K_{6})}\omega(\gamma'){\rm lk}(g(\gamma'))^{2}
-1
\end{eqnarray}
for any element $g$ in ${\rm SE}(K_{6})$. For each element $\gamma'$ in $\bar{\Gamma}(K_{6})$, we define an integer-valued unoriented link invariant $\alpha_{\gamma'}$ of an unoriented link $L$ as follows. If $\gamma'$ is an element in $\Gamma(K_{6})$, then $\alpha_{\gamma'}(L)=2\omega(\gamma')a_{2}(L)$ if $L$ is a knot and $0$ if $L$ is not a knot. If $\gamma'$ is an element in $\Gamma^{(2)}(K_{6})$, then $\alpha_{\gamma'}(L)=-\omega(\gamma')a_{1}(L)^{2}$ if $L$ is a $2$-component link and $0$ if $L$ is not a $2$-component link. Note that $a_{1}(L)^{2}={\rm lk}(L)^{2}$ if $L$ is a $2$-component link. Then by (\ref{assump}), we have 
\begin{eqnarray}\label{const_k6}
\sum_{\gamma'\in \bar{\Gamma}(K_{6})}\alpha_{\gamma'}(g(\gamma'))=-1. 
\end{eqnarray}
Let us consider the graph $Q_{7}$ which is obtained from $K_{6}$ by a single $\triangle Y$-exchange. Note that $\alpha_{\gamma'}$ is compressible for any element $\gamma'$ in $\bar{\Gamma}(K_{6})$. Thus by Theorem \ref{main2} and (\ref{const_k6}), we have 
\begin{eqnarray}\label{const_q7}
\sum_{\gamma\in \bar{\Gamma}(Q_{7})}\tilde{\alpha}_{\gamma}(f(\gamma))=-1 
\end{eqnarray}
for any element $f$ in ${\rm SE}(Q_{7})$. Now we define a map $\tilde{\omega}$ from $\bar{\Gamma}(Q_{7})$ to ${\mathbb Z}$ by 
\begin{eqnarray}\label{c}
\tilde{\omega}(\gamma)
=
\sum_{\gamma'\in \bar{\Phi}^{-1}(\gamma)}\omega(\gamma')
\end{eqnarray}
for an element $\gamma$ in $\bar{\Gamma}(Q_{7})$. Then we have 
\begin{eqnarray}\label{omega}
\tilde{\alpha}_{\gamma}(L)
=
2\sum_{\gamma'\in {\Phi}^{-1}(\gamma)}\omega(\gamma')a_{2}(L)
=
2\tilde{\omega}(\gamma)a_{2}(L)
\end{eqnarray}
for an element $\gamma$ in $\Gamma(Q_{7})$, and 
\begin{eqnarray}\label{xi}
\tilde{\alpha}_{\gamma}(L)
=
-\sum_{\gamma'\in {{\Phi}^{(2)}}^{-1}(\gamma)}\omega(\gamma')a_{1}(L)^{2}
=
-\tilde{\omega}(\gamma)a_{1}(L)^{2}
\end{eqnarray}
for any element $\gamma$ in $\Gamma^{(2)}(Q_{7})$. Recall $\bar{\Gamma}(Q_{7})=\Gamma(Q_{7})\cup \Gamma^{(2)}(Q_{7})$ by Proposition \ref{empty3}. Thus by combining (\ref{const_q7}), (\ref{omega}) and (\ref{xi}), we have 
\begin{eqnarray}\label{a}
2\sum_{\gamma\in \Gamma(Q_{7})}\tilde{\omega}(\gamma)a_{2}(f(\gamma))
-\sum_{\gamma\in \Gamma^{(2)}(Q_{7})}\tilde{\omega}(\gamma){\rm lk}(f(\gamma))^{2}=-1.
\end{eqnarray}
It can be checked directly that each union of mutually disjoint two cycles of a graph in the Petersen family contains all of the vertices of the graph. Thus the map 
\begin{eqnarray*}
\Phi^{(2)}:\Gamma^{(2)}(K_{6})\setminus \bar{\Gamma}_{\triangle}(K_{6})\longrightarrow \Gamma^{(2)}(Q_{7})
\end{eqnarray*}
is bijective by Proposition \ref{injective} and therefore 
\begin{eqnarray}\label{b}
\tilde{\omega}(\gamma)=
\sum_{\gamma'\in {\Phi^{(2)}}^{-1}(\gamma)}\omega(\gamma')
=1
\end{eqnarray}
 for any element $\gamma$ in $\Gamma^{(2)}(Q_{7})$. Thus by (\ref{a}) and (\ref{b}), we have 
\begin{eqnarray*}
2\sum_{\gamma\in \Gamma(Q_{7})}\tilde{\omega}(\gamma)a_{2}(f(\gamma))
=\sum_{\gamma\in \Gamma^{(2)}(Q_{7})}{\rm lk}(f(\gamma))^{2}-1.
\end{eqnarray*}
By repeating this argument, we have the desired conclusion.

Next we show (2). We define a map $\omega$ from $\bar{\Gamma}(K_{7})$ to ${\mathbb Z}$ by 
\begin{eqnarray*}
\omega(\gamma')=\left\{
       \begin{array}{@{\,}ll}
       7 & \mbox{if $\gamma'\in \Gamma_{7}(K_{7})$}\\
       -6 & \mbox{if $\gamma'\in \Gamma_{6}(K_{7})$}\\
       -2 & \mbox{if $\gamma'\in \Gamma_{5}(K_{7})$}\\
       1 & \mbox{if $\gamma'\in \Gamma_{4,3}^{(2)}(K_{7})$}\\
       0 & \mbox{otherwise}
       \end{array}
     \right.
\end{eqnarray*}
for an element $\gamma'$ in $\bar{\Gamma}(K_{7})$. Then by Theorem \ref{N_refine} (2), it follows that 
\begin{eqnarray}\label{assump2}
\sum_{\gamma\in \Gamma(K_{7})}\omega(\gamma)a_{2}(g(\gamma))
=
2\sum_{\gamma\in \Gamma^{(2)}(K_{7})}\omega(\gamma){\rm lk}(g(\gamma))^{2}
-21
\end{eqnarray}
for any element $g$ in ${\rm SE}(K_{7})$. For each element $\gamma'$ in $\bar{\Gamma}(K_{7})$, we define an integer-valued unoriented link invariant $\alpha_{\gamma'}$ of an unoriented link $L$ as follows. If $\gamma'$ is an element in $\Gamma(K_{7})$, then $\alpha_{\gamma'}(L)=\omega(\gamma')a_{2}(L)$ if $L$ is a knot and $0$ if $L$ is not a knot. If $\gamma'$ is an element in $\Gamma^{(2)}(K_{7})$, then $\alpha_{\gamma'}(L)=-2\omega(\gamma')a_{1}(L)^{2}$ if $L$ is a $2$-component link and $0$ if $L$ is not a $2$-component link. Then by (\ref{assump2}), we have 
\begin{eqnarray}\label{const_k7}
\sum_{\gamma'\in \bar{\Gamma}(K_{7})}\alpha_{\gamma'}(g(\gamma'))=-21. 
\end{eqnarray}
Let us consider the graph $H_{8}$ which is obtained from $K_{7}$ by a single $\triangle Y$-exchange. Note also that $\alpha_{\gamma'}$ is compressible for any element $\gamma'$ in $\bar{\Gamma}(K_{7})$. Thus by Theorem \ref{main2} and (\ref{const_k6}), we have 
\begin{eqnarray}\label{const_h8}
\sum_{\gamma\in \bar{\Gamma}(H_{8})}\tilde{\alpha}_{\gamma}(f(\gamma))=-21 
\end{eqnarray}
for any element $f$ in ${\rm SE}(H_{8})$. Now we define a map $\tilde{\omega}$ from $\bar{\Gamma}(H_{8})$ to ${\mathbb Z}$ by 
\begin{eqnarray}\label{c2}
\tilde{\omega}(\gamma)
=
\sum_{\gamma'\in \bar{\Phi}^{-1}(\gamma)}\omega(\gamma')
\end{eqnarray}
for an element $\gamma$ in $\bar{\Gamma}(H_{8})$. Then we can see that 
\begin{eqnarray*}
\sum_{\gamma\in \Gamma(H_{8})}\tilde{\omega}(\gamma)a_{2}(f(\gamma))
= 2\sum_{\gamma\in \Gamma^{(2)}(H_{8})}\tilde{\omega}(\gamma){\rm lk}(f(\gamma))^{2}
-21
\end{eqnarray*}
for any element $f$ in ${\rm SE}(H_{8})$ in the same way as the proof of (1). By repeating this argument, we have the desired conclusion. 
\end{proof}

\begin{Example}\label{ex1} 
Let $\tilde{\omega}$ be the map from $\bar{\Gamma}(Q_{7})$ to ${\mathbb Z}$ as in (\ref{c}). In the following, let us determine $\tilde{\omega}(\gamma)$ for each element $\gamma$ in $\Gamma(Q_{7})$. If $\gamma$ is an element in $\Gamma_{7}(Q_{7})$, then there uniquely exists an element $\gamma'$ in $\Gamma_{6}(K_{6})$ such that $\Phi^{-1}_{K_{6},Q_{7}}(\gamma)=\left\{\gamma'\right\}$. Thus we have $\tilde{\omega}(\gamma)=\omega(\gamma')=1$. If $\gamma$ is an element in $\Gamma_{6}(Q_{7})$, we divide our situation into the following three cases. If $\gamma$ does not contain $x$, then there uniquely exists an element $\gamma'$ in $\Gamma_{6}(K_{6})$ such that $\Phi^{-1}_{K_{6},Q_{7}}(\gamma)=\left\{\gamma'\right\}$. Thus we have $\tilde{\omega}(\gamma)=\omega(\gamma')=1$. If $\gamma$ contains $x$ and does not contain $u,v$ or $w$, then there exists an element $\gamma'_{1}$ in $\Gamma_{5}(K_{6})$ and an element $\gamma'_{2}$ in $\Gamma_{6}(K_{6})$ such that $\Phi^{-1}_{K_{6},Q_{7}}(\gamma)=\left\{\gamma'_{1},\gamma'_{2}\right\}$. Thus we have $\tilde{\omega}(\gamma)=\omega(\gamma'_{1})+\omega(\gamma'_{2})=0$. If $\gamma$ contains $u,v,w$ and $x$, then there uniquely exists an element $\gamma'$ in $\Gamma_{5}(K_{6})$ such that $\Phi^{-1}_{K_{6},Q_{7}}(\gamma)=\left\{\gamma'\right\}$. Thus we have $\tilde{\omega}(\gamma)=\omega(\gamma')=-1$. If $\gamma$ is an element in $\Gamma_{5}(Q_{7})$, we divide our situation into the following two cases. If $\gamma$ does not contain $x$, then there uniquely exists an element $\gamma'$ in $\Gamma_{5}(K_{6})$ such that $\Phi^{-1}_{K_{6},Q_{7}}(\gamma)=\left\{\gamma'\right\}$. Thus we have $\tilde{\omega}(\gamma)=\omega(\gamma')=-1$. If $\gamma$ contains $x$, then note that $\gamma$ does not contain $u,v$ or $w$. Then there exists an element $\gamma'_{1}$ in $\Gamma_{4}(K_{6})$ and an element $\gamma'_{2}$ in $\Gamma_{5}(K_{6})$ such that $\Phi^{-1}_{K_{6},Q_{7}}(\gamma)=\left\{\gamma'_{1},\gamma'_{2}\right\}$. Thus we have $\tilde{\omega}(\gamma)=\omega(\gamma'_{1})+\omega(\gamma'_{2})=-1$. Finally, if $\gamma$ is an element in $\Gamma(Q_{7})\setminus \Gamma_{5}(Q_{7})\cup \Gamma_{6}(Q_{7})\cup \Gamma_{7}(Q_{7})$, we have $\tilde{\omega}(\gamma)=0$. In conclusion, we see that 
\begin{eqnarray*}
\tilde{\omega}(\gamma)=\left\{
       \begin{array}{@{\,}ll}
       1 & \mbox{if $\gamma\in \Gamma_{7}(Q_{7})\cup \left\{\delta\in \Gamma_{6}(Q_{7})\ |\ \delta\not\ni x\right\}$}\\
       -1 & \mbox{if $\gamma\in \left\{\delta\in \Gamma_{6}(Q_{7})\ |\ \delta\ni x,u,v,w\right\}\cup \Gamma_{5}(Q_{7})$}\\
       0 & \mbox{otherwise}
       \end{array}
     \right.
\end{eqnarray*}
for an element $\gamma$ in $\Gamma(Q_{7})$. 
\end{Example}

\begin{Example}\label{ex2}
Let $\tilde{\omega}$ be the map from $\bar{\Gamma}(H_{8})$ to ${\mathbb Z}$ as in (\ref{c2}). Then we see that 
\begin{eqnarray*}
\tilde{\omega}(\gamma)=\left\{
       \begin{array}{@{\,}ll}
       7 & \mbox{if $\gamma\in \Gamma_{8}(H_{8})\cup \left\{\delta\in \Gamma_{7}(H_{8})\ |\ \delta\not\ni x\right\}$}\\
       1 & \mbox{if $\gamma\in \left\{\delta\in \Gamma_{7}(H_{8})\ |\ \delta\ni x,\ \delta\not\supset \left\{u,v,w\right\}\right\}$}\\

       -6 & \mbox{if $\gamma\in \left\{\delta\in \Gamma_{7}(H_{8})\ |\ \delta\ni x,u,v,w\right\}\cup \left\{\delta\in \Gamma_{6}(H_{8})\ |\ \delta\not\ni x\right\}$}\\
       -8 & \mbox{if $\gamma\in \left\{\delta\in \Gamma_{6}(H_{8})\ |\ \delta \ni x,\ \delta\not\supset \left\{u,v,w\right\}\right\}$}\\
       -2 & \mbox{if $\gamma\in \left\{\delta\in \Gamma_{6}(H_{8})\ |\ \delta\ni x,u,v,w\right\}\cup \Gamma_{5}(H_{8})$}\\

       0 & \mbox{otherwise}
       \end{array}
     \right.
\end{eqnarray*}
for an element $\gamma$ in $\Gamma(H_{8})$ in a similar way as in Example \ref{ex1}, and also see that 
\begin{eqnarray*}
\tilde{\omega}(\gamma)=\left\{
       \begin{array}{@{\,}ll}
       1 & \mbox{if $\gamma\in \Gamma^{(2)}_{5,3}(H_{8})\cup \Gamma^{(2)}_{4,4}(H_{8})\cup \left\{\mu\in \Gamma^{(2)}_{4,3}(H_{8})\ |\ \mu\not\supset\left\{x,u,v,w\right\}\right\}$}\\       
       0 & \mbox{otherwise}
       \end{array}
     \right.
\end{eqnarray*}
for an element $\gamma$ in $\Gamma^{(2)}(Q_{7})$. Now we denote the set 
\begin{eqnarray*}
\Gamma_{8}(H_{8})\cup \left\{\delta\in \Gamma_{7}(H_{8})\ |\ \delta\not\ni x\right\}\cup \left\{\delta\in \Gamma_{7}(H_{8})\ |\ \delta\ni x,\ \delta\not\supset \left\{u,v,w\right\}\right\}
\end{eqnarray*}
by $\Gamma$. Then it follows that 
\begin{eqnarray*}
\sum_{\gamma\in \Gamma}{\rm Arf}(f(\gamma))\equiv 1\pmod{2}
\end{eqnarray*}
for any element $f$ in ${\rm SE}(H_{8})$. We remark here that the restricted map of $\Phi_{K_{7},H_{8}}$ on ${\Gamma_{7}(K_{7})}$ is a bijection from ${\Gamma_{7}(K_{7})}$ to $\Gamma$. On the other hand, the restricted map of $\Phi_{H_{8},F_{9}}\circ \Phi_{K_{7},H_{8}}$ on ${\Gamma_{7}(K_{7})}$ is not injective, see Figure \ref{twice2}.
\end{Example}

\begin{figure}[htbp]
      \begin{center}
\scalebox{0.45}{\includegraphics*{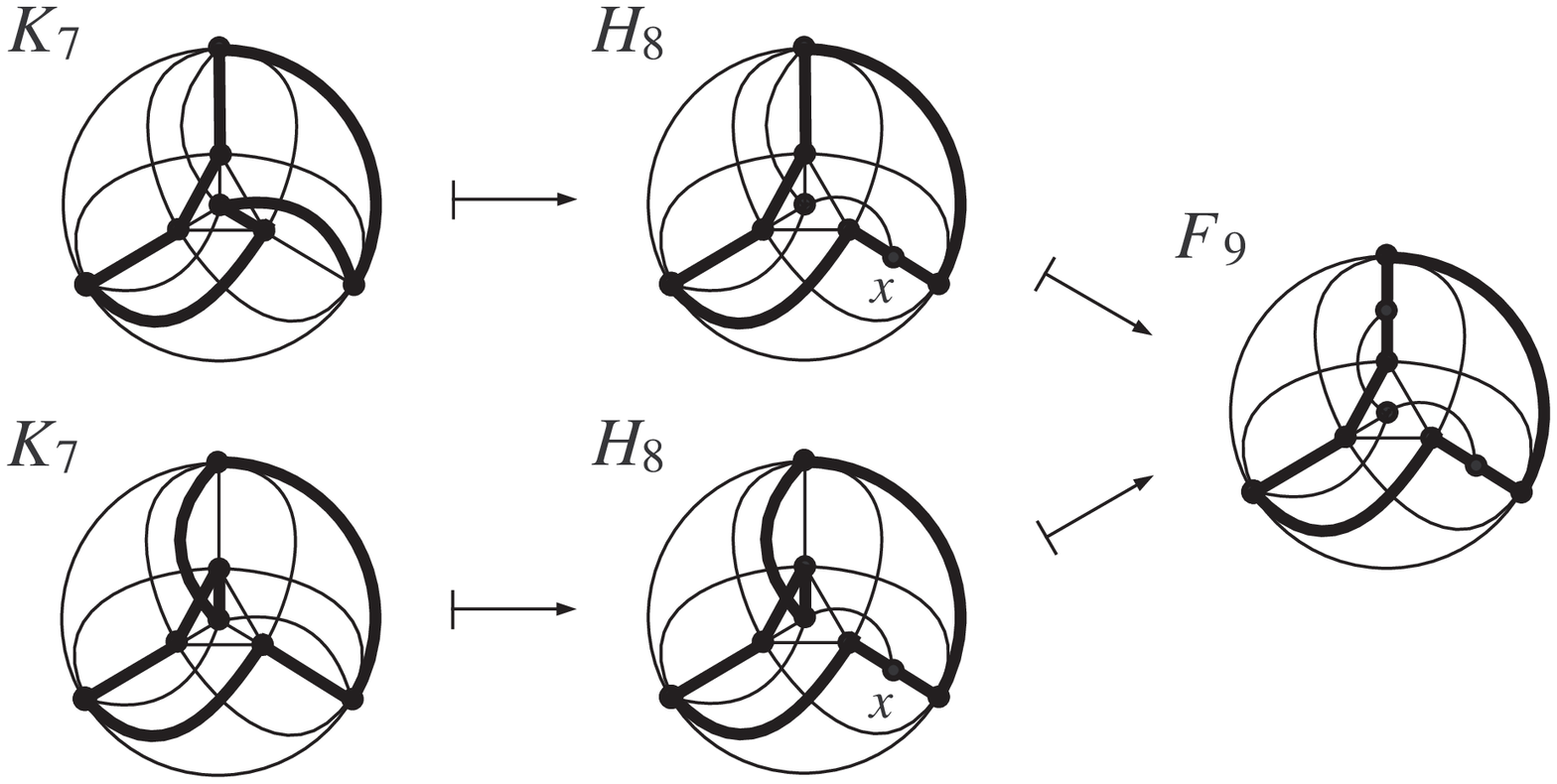}}
      \end{center}
   \caption{}
  \label{twice2}
\end{figure} 

\begin{Remark}\label{comments} 
{\rm

Let $G$ be a graph which is obtained from $K_{7}$ by a finite sequence of $\triangle Y$-exchanges. Then by Corollary \ref{main_cor} (2), for any element $f$ in ${\rm SE}(G)$ there exists an element $\gamma$ in $\Gamma(G)$ such that ${\rm Arf}(f(\gamma))=1$. This fact is also shown by applying Theorem \ref{CG} (2) and Proposition \ref{map} directly as follows. It is sufficient to show that if for any element $g$ in ${\rm SE}(G_{\triangle})$ there exists an element $\gamma'$ in $\Gamma(G_{\triangle})$ such that ${\rm Arf}(g(\gamma'))=1$, then for any element $f$ in ${\rm SE}(G_{Y})$ there exists an element $\gamma$ in $\Gamma(G_{Y})$ such that ${\rm Arf}(f(\gamma))=1$. Let $f$ be an element in ${\rm SE}(G_{Y})$. Then there exists an element $\gamma'$ in $\Gamma(G_{\triangle})$ such that ${\rm Arf}(\varphi(f)(\gamma'))=1$. Note that $\gamma'\neq \triangle$ because $\varphi(f)(\triangle)$ is a trivial knot. Let $\gamma$ be the image of $\gamma'$ by $\Phi$. Then we have ${\rm Arf}(f(\gamma))={\rm Arf}(f(\Phi(\gamma')))={\rm Arf}(\varphi(f)(\gamma'))=1$. Corollary \ref{main_cor} (2) insists on the result that is stronger than the fact above. Namely, there exists a subset $\Gamma$ of $\Gamma(G)$ which depends on only $G$ such that the sum of the Arf invariants over all of the images of the elements in $\Gamma$ by $f$ is odd for any element $f$ in ${\rm SE}(G)$. 
}
\end{Remark}


%
{\normalsize
}

\end{document}